\DeclareMathAlphabet{\mathpzc}{OT1}{pzc}{m}{it}
\newtheoremstyle{note}{11pt}{11pt}{}{}{\bfseries}{.}{.5em}{}
\newtheorem{theo}[equation]{Theorem}
\newtheorem{prop}[equation]{Proposition}
\newtheorem{conj}[equation]{Conjecture}
\newtheorem{coro}[equation]{Corollary}
\numberwithin{equation}{section}
\newtheorem{lemma}[equation]{Lemma}
\newcommand{\Q}{\mathbb{Q}}
\newcommand{\F}{\mathbb{F}}
\newcommand{\Z}{\mathbb{Z}}
\newcommand{\Gm}{\mathbb{G}_m}
\newcommand{\R}{\mathbb{R}}
\newcommand{\C}{\mathbb{C}}
\newcommand{\Hbb}{\mathbb{H}}
\newcommand{\K}{\mathcal{K}}
\newcommand{\I}{\mathcal{I}}
\newcommand{\Ll}{\mathcal{L}}
\newcommand{\Cc}{\mathcal{C}}
\newcommand{\oo}{\mathcal{O}}
\newcommand{\h}{\mathcal{H}}
\newcommand{\W}{\mathcal{W}}
\newcommand{\U}{\mathcal{U}}
\newcommand{\G}{\Gamma}
\newcommand{\LL}{\Lambda}
\newcommand{\eps}{\varepsilon}
\newcommand{\gl}{\mathrm{GL}_2}
\newcommand{\Sl}{\mathrm{SL}_2}
\newcommand{\Log}{\mathrm{Log}}
\newcommand{\boldf}{\mathbf{f}}
\newcommand{\boldF}{\mathbf{F}}
\newcommand{\Dfrak}{\mathfrak{D}}
\newcommand{\pfrak}{\mathfrak{p}}
\newcommand{\bfrak}{\mathfrak{b}}
\newcommand{\lgr}{\left\{}
\newcommand{\rgr}{\right\}}
\newcommand{\rra}{\right\rangle}
\newcommand{\lla}{\left\langle}
\title{Derivative of symmetric square $p$-adic $L$-functions via pull-back formula}
\author{Giovanni Rosso
\footnote{PhD Fellowship of Fund for Scientific Research - Flanders, partially supported by a JUMO grant from KU Leuven (Jumo/12/032), a ANR grant  (ANR-10-BLANC 0114 ArShiFo) and a NSF grant (FRG  DMS  0854964). 
}
}
\begin{document}

\maketitle
In this paper we recall the method of Greenberg and Stevens to calculate derivatives of $p$-adic $L$-functions using deformations of Galois representation and we apply it to the symmetric square of a modular form  Steinberg at $p$. Under certain hypotheses on the conductor and the Nebentypus, this proves a conjecture of Greenberg and Benois on trivial zeros.
\tableofcontents
\section{Introduction}
Let $M$ be a motive over $\Q$ and suppose that it is pure of weight zero and irreducible. We suppose also that $s=0$ is a critical integer \`a la Deligne.\\ 
 Fix a prime number $p$ and let $V$ be the $p$-adic representation associated to $M$. We fix once and for all an isomorphism $\C \cong \C_p$. If $V$ is semistable, it is conjectured that for each regular submodule $D$ \cite[\S 0.2]{BenLinv} there exists a $p$-adic $L$-function $L_p(V,D,s)$. It is supposed to interpolate the special values of the $L$-function of $M$ twisted by finite-order characters of $1+p\Z_p$ \cite{PR}, multiplied by a corrective factor (to be thought of as a part of the local epsilon factor at $p$) which depends on $D$. In particular, we expect the following interpolation formula at $s=0$;
 \begin{align*}
 L_p(V,D,s) = E(V,D) \frac{L(V,0)}{\Omega(V)},
 \end{align*}
for $\Omega(V)$   a complex period and $E(V,D)$ some Euler type factors which conjecturally have to be removed in order to permit $p$-adic interpolation (see \cite[\S 2.3.2]{BenLinv} for the case when $V$ is crystalline). It may happen that certain of these Euler factors vanish. In this case the connection with what we are interested in, the special values of the $L$-function, is lost. Motivated by the seminal work of Mazur-Tate-Teitelbaum \cite{MTT}, Greenberg, in the ordinary case \cite{TTT}, and Benois \cite{BenLinv} have conjectured the following;
\begin{conj}\label{MainCo}[Trivial zeros conjecture]
Let $e$ be the number of Euler-type factors of $E_p(V,D)$ which vanish. Then the order of zeros at $s=0$ of $L_p(V,D,s)$ is $e$ and
\begin{align}\label{FormMC}
\lim_{s \rightarrow 0} \frac{L_p(V,D,s)}{s^e } = \Ll(V,D) E^*(V,D) \frac{L(V,0)}{\Omega(V)}
\end{align}
for $E^*(V,D) $ the non-vanishing factors of $E(V,D)$ and   $\Ll(V,D)$ a non-zero number called the $\Ll$-invariant.
\end{conj}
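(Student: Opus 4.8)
Since Conjecture~\ref{MainCo} is open in general, I outline the route for the case this paper addresses: $V=\mathrm{Sym}^2 V_f$, Tate-twisted to weight zero, where $f$ is a newform that is Steinberg at $p$ and $D$ is the regular submodule given by the $p$-adic filtration; for this $V$ (under the running hypotheses on conductor and nebentypus) exactly one Euler factor of $E_p(V,D)$ vanishes at $s=0$, so the target is $e=1$, with $L_p(V,D,s)$ the symmetric-square $p$-adic $L$-function of Schmidt--Hida. The engine is the Greenberg--Stevens deformation argument. First I would place $f$ in a $p$-adic analytic family $\boldf$ over a disc $\mathcal{U}$ in weight space around $k$ (a Hida family if $f$ is ordinary at $p$, otherwise a Coleman family of the appropriate slope), with $U_p$-eigenvalue $a_p(\kappa)$ a non-vanishing analytic function and $a_p(k)=a_p(f)$. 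Then, using the \emph{pull-back formula} --- a Siegel-type Eisenstein series on a higher-rank (symplectic or unitary) group, restricted to (essentially) $\gl\times\gl$ and paired with $\boldf$ in the manner of the Böcherer--Schmidt--Hida doubling construction --- I would build a two-variable $p$-adic $L$-function $L_p(\boldf,\kappa,s)$ interpolating, up to periods and local factors, the critical values $L(\mathrm{Sym}^2 f_\kappa,s)$, whose specialisation at $\kappa=k$ recovers $L_p(V,D,s)$ up to a nonzero constant.

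\textbf{Improved $p$-adic $L$-function.} The decisive extra ingredient is a one-variable ``improved'' $p$-adic $L$-function $L_p^{*}(\boldf,\kappa)$, obtained by choosing in the pull-back formula the section at $p$ yielding the \emph{whole} local $L$-factor rather than the partial factor forced by the $p$-stabilisation; it interpolates the near-central values $L(\mathrm{Sym}^2 f_\kappa,k_\kappa-1)$ and, crucially, is free of the offending Euler factor. I need $L_p^{*}(\boldf,k)\ne 0$: this follows from non-vanishing of the corresponding complex $L$-value (it sits at the edge of the critical strip, where a Shimura/Sturm-type statement applies, or else the non-vanishing is granted among the hypotheses) together with the fact that its $p$-adic interpolation factor at $\kappa=k$ is a unit precisely because $f$ is Steinberg.

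\textbf{Factorisation and differentiation.} The technical heart is the identity
\[
L_p(\boldf,\kappa,s_0(\kappa))\;=\;\mathcal{E}_p(\kappa)\,L_p^{*}(\boldf,\kappa),\qquad \mathcal{E}_p(\kappa)=1-a_p(\kappa)^{-2}p^{\,k_\kappa-2}
\]
(up to a twist depending on the nebentypus), where $s=s_0(\kappa)$ is the line of near-central points; $\mathcal{E}_p$ has a simple zero at $\kappa=k$ precisely because $f$ Steinberg forces $a_p(f)^2=p^{\,k-2}$. Granting this, I differentiate the identity in $\kappa$ at $\kappa=k$: since $\mathcal{E}_p(k)=0$, the right-hand side contributes only $\mathcal{E}_p'(k)\,L_p^{*}(\boldf,k)$, and --- combining this with the functional equation of $L_p(\boldf,\kappa,s)$ and the vanishing of $s\mapsto L_p(V,D,s)$ at $s=0$, i.e.\ the usual two-variable Greenberg--Stevens bookkeeping --- one extracts
\[
\left.\frac{d}{ds}L_p(V,D,s)\right|_{s=0}\;=\;c\cdot \mathcal{E}_p'(k)\cdot L_p^{*}(\boldf,k)
\]
for an explicit nonzero $c$. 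Recognising $L_p^{*}(\boldf,k)$, through its interpolation property, as a nonzero multiple of $E^{*}(V,D)\,L(V,0)/\Omega(V)$, and computing $\mathcal{E}_p'(k)=2\,\tfrac{d\log a_p}{d\kappa}(k)-\log p$ from the displayed formula for $\mathcal{E}_p$, this becomes \eqref{FormMC} with $e=1$, provided the quantity $2\,\tfrac{d\log a_p}{d\kappa}(k)-\log p$ (up to the normalisation carried by $c$) is the $\mathcal{L}$-invariant.

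\textbf{Identifying the $\mathcal{L}$-invariant.} The final step is to match this analytic quantity with $\Ll(V,D)$ as defined in \cite{BenLinv} via the $(\varphi,\Gamma)$-module of $V$ (equivalently, the Greenberg recipe through Selmer groups), and to verify it is nonzero; this is the explicit computation of the $\mathcal{L}$-invariant of the symmetric square of a Steinberg form, carried out by analysing Hida's deformation of the ordinary filtration along $\boldf$ and feeding it into Benois's formalism. \textbf{Main obstacle.} Everything after the factorisation is essentially formal; the real work --- and the reason the hypotheses on conductor and nebentypus are imposed --- is establishing the factorisation itself, which requires evaluating the archimedean and $p$-adic zeta integrals attached to the two pull-back sections and matching every period and local factor exactly.
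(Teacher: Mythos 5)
You are defending a conjecture: the paper does not prove Conjecture \ref{MainCo} in general, only the special case of Theorem \ref{MainThBon}, and even that only up to the non-vanishing of the $\Ll$-invariant; so your final step, where you propose to ``verify it is nonzero'', claims more than the paper (or the current literature) establishes, and the non-vanishing of $L_p^{*}(\boldf,k)$ or of the complex $L$-value is in any case not needed for the identity (\ref{FormMC}). That said, your overall architecture --- Greenberg--Stevens, a two-variable $p$-adic $L$-function built from pull-backs of Siegel Eisenstein series and a $p$-adic Petersson product, an improved one-variable $p$-adic $L$-function, and the identification of the resulting derivative with the $\Ll$-invariant computed in \cite{BenLinv2,MokLinv} as $-2\,\mathrm{d}\log\lambda_p(\kappa)/\mathrm{d}\kappa\vert_{\kappa=\kappa_0}$ --- is exactly the paper's.

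The genuine gap is in your key factorisation: you put the improved $p$-adic $L$-function on the wrong line. In your display, $s_0(\kappa)$ is the moving near-central line and you assert that $L_p^{*}$ interpolates $L(\mathrm{Sym}^2 f_\kappa,k_\kappa-1)$. But that moving line is precisely the line along which the two-variable function vanishes identically --- hypothesis ({\it i}) of the method; in the paper it is the line $\kappa'=[1]$, and (in the ordinary case) the vanishing can be read off from the interpolation formula itself: for a dense set of classical weights $k>2$ the specialisations are $p$-old and the factor $E_2$ vanishes at $s=k-2$, while at $\kappa_0$ it is the trivial zero $E_1=0$; no functional equation is invoked. Consequently your identity $L_p(\boldf,\kappa,s_0(\kappa))=\mathcal{E}_p(\kappa)L_p^{*}(\boldf,\kappa)$ would force $L_p^{*}\equiv 0$, since the left-hand side vanishes identically while $\mathcal{E}_p(\kappa)=1-a_p(\kappa)^{-2}p^{k_\kappa-2}$ is non-zero at classical $\kappa\neq\kappa_0$ in a Hida family; it could then yield no derivative information. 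The paper's Corollary \ref{CoroImp} factors instead along the line where the cyclotomic variable is held fixed at the evaluation point of $V$ itself, namely $\kappa'=[k-k_0+1]$ (so $s=k_0-2$): $L_p(\kappa,[k-k_0+1])=(1-\chi'\chi_1(p)\lambda_p(\kappa)^{-2}p^{k_0-2})\,L_p^{*}(\kappa)$, with the \emph{fixed} exponent $p^{k_0-2}$, and $L_p^{*}(\kappa)$ interpolates the fixed critical value $L(\mathrm{Sym}^2 F(\kappa),k_0-1)$, not the moving near-central one. Correcting this also removes the spurious $-\log p$ in your $\mathcal{E}_p'(k)$: the removed factor depends on $\kappa$ only through $\lambda_p(\kappa)$, and the resulting $\Ll$-invariant is exactly $-2\,\mathrm{d}\log\lambda_p(\kappa)/\mathrm{d}\kappa\vert_{\kappa=\kappa_0}$. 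Finally, in the finite-slope case the paper does not construct the two-variable function by this method at all (it imports it from \cite[Theorem 4.14]{RosOC}, whence the restriction $p\neq 2$ there); only the improved one-variable function is built here for arbitrary slope.
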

There are many different ways in which the $\Ll$-invariant can be defined. A first attempt at such a definition could that of an {\it analytic} $\Ll$-invariant
\begin{align*}
\Ll^{\mathrm{an}}(V,D)=\frac{\lim_{s \rightarrow 0} \frac{L_p(V,D,s)}{s^e}} {E^*(V,D) \frac{L(V,0)}{\Omega(V)}}.
\end{align*}
Clearly, with this definition, the above conjecture reduces to the statement on the order of $L_p(V,D,s)$ at $s=0$ and the non-vanishing of  the $\Ll$-invariant.\\

In \cite{MTT} the authors give a more arithmetic definition of the $\Ll$-invariant for an elliptic curve, in terms of an {\it extended} regulator on the {\it extended} Mordell-Weil group. The search for an intrinsic, Galois theoretic interpretation of this error factor led to the definition of the {\it arithmetic} $\Ll$-invariant $\Ll^{\mathrm{ar}}(V,D)$  given by Greenberg \cite{TTT} (resp. Benois \cite{BenLinv}) in the ordinary case (resp. semistable case) using Galois cohomology (resp. cohomology of $(\varphi,\G)$-module). \\
For $2$-dimensional Galois representations many more definitions have been proposed and we refer to \cite{ColSurL} for a detailed exposition.\\ 

When the $p$-adic $L$-function can be constructed using an Iwasawa cohomology class and the big exponential \cite{PR}, one can use the machinery developed in \cite[\S 2.2]{BenNC} to prove formula (\ref{FormMC}) with $\Ll=\Ll^{\mathrm{al}}$. Unluckily, it is a very hard problem to construct classes in cohomology  which are  related to  special values. Kato's Euler system has been used in this way in \cite{BenNC} to prove many instances of Conjecture \ref{MainCo} for modular forms. It might be possible that the construction of Lei-Loeffler-Zerbes \cite{LLZ} of an Euler system for the Rankin product of two modular forms could produce such Iwasawa classes for other Galois representations; in particular, for $V=\mathrm{Sym}^2(V_f)(1)$, where $V_f$ is the Galois representation associated to a weight two modular form (see also \cite{PR2} and the upcoming work of Dasgupta on Greenberg's conjecture for the symmetric square $p$-adic $L$-functions of ordinary forms). We also refer the reader to \cite{BDR2}.\\

We  present in this paper a different method which has already been used extensively in many cases and which we think to be more easy to apply at the current state: the method of Greenberg and Stevens \cite{TTT}.
Under certain hypotheses which we shall state in the next section, it allows us to calculate the derivative of ${L_p(V,D,s)}$.\\

The main ingredient of their method is the fact that $V$ can be $p$-adically deformed in a one dimensional family. For example,  modular forms can be deformed in a Hida-Coleman family. We have decided to present this method because of the recent developments on families of automorphic forms \cite{AIP,Bras} have opened the door to the construction of families of $p$-adic $L$-function in many different settings. For example, we refer to the ongoing PhD thesis of Z. Liu on $p$-adic $L$-functions for  Siegel modular forms.\\ 
Consequently, we expect  that one could prove many new instances of Conjecture \ref{MainCo}. \\ 

In Section \ref{padicLfunctions} we shall apply this method to the case of the symmetric square of a modular form which is Steinberg at $p$. The theorem which will be proved is the following;
\begin{theo}\label{MainThBon}
Let $f$ be a modular form of trivial Nebentypus, weight $k_0$ and conductor $Np$, $N$ squarefree and prime to $p$. If $p=2$, suppose then $k_0=2$. Then Conjecture \ref{MainCo} (up to the non-vanishing of the $\Ll$-invariant) is true for $L_p(s,\mathrm{Sym}^2(f))$.
\end{theo}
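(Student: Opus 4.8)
The plan is to apply the Greenberg–Stevens method, which requires three ingredients: a one-variable $p$-adic family through $f$, a two-variable $p$-adic $L$-function specializing to $L_p(s,\mathrm{Sym}^2(f))$, and control of the Galois-cohomological $\mathcal{L}$-invariant in terms of the family. Since $f$ has trivial Nebentypus and is Steinberg at $p$, it is ordinary at $p$, so I would take the Hida family $\boldf$ passing through $f$, with weight variable $k$ running over a neighbourhood of $k_0$ in weight space; the arithmetic specializations $f_k$ are $p$-stabilized newforms of weight $k$, level $Np$, and $f_{k_0}=f$. The representation $V=\mathrm{Sym}^2(V_f)(1)$ is critical at $s=0$ with a single trivial zero: because $f$ is Steinberg at $p$, the relevant Euler-type factor $E(V,D)$ has exactly one vanishing factor (coming from the fact that $\alpha_p^2 = p^{k_0-2}$ for the Steinberg form, making the modified Euler factor at $p$ vanish), so $e=1$ in Conjecture \ref{MainCo}.

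\textbf{First} I would construct, or invoke from the literature (Hida, Schmidt, and refinements via the pull-back formula from an Eisenstein series on a higher-rank group — this is the titular ingredient), a two-variable $p$-adic $L$-function $L_p(k,s)$ attached to the Hida family $\boldf$, interpolating the symmetric square special values $L(s,\mathrm{Sym}^2(f_k))$ divided by appropriate periods, for $k$ in the family and $s$ in the critical range, with $L_p(k_0,s)=L_p(s,\mathrm{Sym}^2(f))$ up to the removed Euler factor. The pull-back formula of Garrett–Shimura expresses these values as integrals of $\boldf$ against a restriction of a Siegel–Eisenstein series; making this $p$-adic and variable in $k$ is where the bulk of the technical work lies, and I would lean on the constructions already available (e.g.\ via families of Eisenstein measures) rather than redo them. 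The key structural output I need is an \emph{improved} $p$-adic $L$-function $L_p^*(k)$ — a one-variable function, regular at $k_0$, such that $L_p(k,s)$ and $L_p^*(k)$ agree up to an explicit elementary factor along a line through $(k_0,0)$; concretely one expects a factorization of the shape
\begin{align*}
L_p(k,k/2-1) = \left(1 - \frac{p^{k/2-1}}{a_p(k)}\right) L_p^*(k)
\end{align*}
or similar, where $a_p(k)$ is the $U_p$-eigenvalue in the family.

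\textbf{Next}, following Greenberg–Stevens, I differentiate along the weight direction. At $k=k_0$ the elementary factor vanishes (its vanishing is exactly the trivial zero), so
\begin{align*}
\frac{d}{ds}L_p(s,\mathrm{Sym}^2(f))\Big|_{s=0} = -\left(\frac{d}{dk}\log a_p(k)\Big|_{k=k_0}\right)\cdot L_p^*(k_0) \cdot (\text{explicit constant}),
\end{align*}
after matching the $s$-derivative with the $k$-derivative using the functional equation / the geometry of the line of interpolation. The quantity $-2\,\frac{d}{dk}\log a_p(k)|_{k=k_0}$ is, by the work of Greenberg–Stevens and Mazur–Tate–Teitelbaum on the Hida family, exactly the $\mathcal{L}$-invariant $\mathcal{L}(\mathrm{Sym}^2(f))$ attached to the Steinberg-at-$p$ place — this identifies the analytic $\mathcal{L}^{\mathrm{an}}$ with the Fontaine–Mazur/Greenberg arithmetic one. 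Combining with the interpolation property of $L_p^*$ at $k_0$ (which recovers $E^*(V,D)L(V,0)/\Omega(V)$) yields formula \eqref{FormMC} with $e=1$, modulo the non-vanishing of $\mathcal{L}$, as stated.

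\textbf{The hard part} is the construction of the two-variable object with an improved one-variable companion: one must show that the $p$-adic $L$-function built from the pull-back formula genuinely has the predicted factorization with the $U_p$-eigenvalue appearing as the obstruction, and that the line $s = k/2-1$ (or whichever line the interpolation naturally lives on) passes through the point of interest so that the $\frac{d}{ds}$ and $\frac{d}{dk}$ derivatives are comparable. Subsidiary obstacles are the case $p=2$ (hence the hypothesis $k_0=2$, which sidesteps convergence issues in Hida theory and in the Eisenstein measure at $2$), the choice of the regular submodule $D$ and verifying $e=1$, and checking that the periods match on the nose so that no spurious algebraic factor corrupts the final identity. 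I do not expect the non-vanishing of $\mathcal{L}(\mathrm{Sym}^2(f))$ to be accessible by this method — hence the explicit caveat in the statement — since it amounts to the non-triviality of a specific extension class (equivalently $\frac{d}{dk}\log a_p(k)|_{k=k_0}\neq 0$), which is a genuinely open arithmetic question.
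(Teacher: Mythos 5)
There is a genuine gap at the very first step: you assert that, because $f$ is Steinberg at $p$ with trivial Nebentypus, it is ordinary at $p$, and you build the whole argument on the Hida family through $f$. This is false unless $k_0=2$: for a newform of conductor $Np$ with trivial Nebentypus one has $a_p^2=p^{k_0-2}$, so the slope is $(k_0-2)/2>0$ as soon as $k_0>2$. For $k_0>2$ the form lives on a Coleman (finite-slope) family, and this is precisely where the difficulty of the theorem sits: the two-variable $p$-adic $L$-function obtained from the pull-back formula and the Eisenstein measure can only be constructed in the ordinary case (the construction uses $U_p^{-1}\mathrm{Pr}^{\mathrm{ord}}$ to approximate the differential operator $\mathring{\Dfrak}_l^{s}$ by $\partial_2^s$, and applying $U_p^{-1}$ in positive slope introduces uncontrolled denominators), while the \emph{improved} one-variable function $L_p^*(\kappa)$ exists for any finite slope. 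In the finite-slope case the paper therefore has to import the two-variable function from \cite[Theorem 4.14]{RosOC}, which is available only for $p\neq 2$; this, and not ``convergence issues in Hida theory at $2$,'' is the reason for the hypothesis $p=2\Rightarrow k_0=2$ in the statement. Your proposal as written proves at best the ordinary case $k_0=2$ and gives no route to weights $k_0>2$.

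Two secondary points. First, the line along which the two-variable function vanishes identically is not the central line $s=k/2-1$ matched via the functional equation (that is the Greenberg--Stevens elliptic-curve picture); here it is the line of trivial zeros $\kappa'=[1]$, since every classical specialization of the family is Steinberg at $p$ ($\lambda_p(\kappa)^2=p^{k-2}$) and hence its own Euler factor $1-\lambda_p(\kappa)^{-2}p^{k-2}$ vanishes, while the improved factorization is taken along $\kappa'=[k-k_0+1]$ with vanishing factor $1-\lambda_p(\kappa)^{-2}p^{k_0-2}$; your guessed factor $1-p^{k/2-1}/a_p(k)$ is the wrong shape for the symmetric square, although you hedge on this. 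Second, the identification of $-2\,\mathrm{d}\log\lambda_p(\kappa)/\mathrm{d}\kappa$ at $\kappa_0$ with the arithmetic $\mathcal{L}$-invariant of $\mathrm{Sym}^2(f)$ is not Greenberg--Stevens/Mazur--Tate--Teitelbaum but the computations of Benois and Mok for the symmetric square, and one still needs the hypothesis that $N$ is squarefree (Steinberg at all bad primes) to compare the imprimitive $L$-function appearing in the interpolation formula with the motivic $L$-value; these are quotable but must be cited, not folded into the classical Hida-family literature.
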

In this case there is only one choice for the regular submodule $D$ and the trivial zero appears at $s=k_0-1$. This theorem generalizes \cite[Theorem  1.3]{RosOC} in two ways: we allow $p=2$ in the ordinary case and when $p\neq 2$ we do not require $N$ to be even. In particular, we cover the case of the symmetric square $11$-adic $L$-function for the elliptic curve $X_0(11)$ and of the $2$-adic $L$-function for the symmetric square of $X_0(14)$. \\

In the ordinary case ({\it i.e.} for $k=2$), the proof is completly independent of \cite{RosOC} as we can construct directly a two-variable $p$-adic $L$-function. In the finite slope case we can not construct a two-variable function with the method  described below and consequently we need the two-variable $p$-adic $L$-function of \cite[Theorem 4.14]{RosOC}, which has been constructed only for $p\neq 2$.\\

In the ordinary setting, the same theorem (but with the hypothesis at $2$) has been proved by Greenberg and Tilouine (unpublished). The importance of this formula for the proof of the Greenberg-Iwasawa Main Conjecture \cite{Urb} has been put in evidence in \cite{HTU}. \\

We improve the result of \cite{RosOC} a different construction of the $p$-adic $L$-function, namely that of \cite{BS}. We express the complex $L$-function using Eisenstein series for $\mathrm{GSp}_4$, a pullback formula to the Igusa divisor and a double Petersson product. We are grateful to \'E. Urban for having suggested this approach to us. In Section \ref{Eisenstein} we briefly recall the theory of Siegel modular forms and develop a theory of $p$-adic modular forms for $\gl \times \gl$ necessary for the construction of $p$-adic families of Eisenstein series. \\

\paragraph{Acknowledgement} This paper is part of the author's PhD thesis and the author is very grateful to his director J. Tilouine for the constant guidance and the attention given. We thank \'E. Urban for inviting the author at Columbia University (where this work has seen the day) and for the generosity with which he shared with us his ideas and insights. We  thank R. Casalis for interesting discussions on K\"ahler differentials and J. Welliaveetil for useful remarks.

\section{The method of Greenberg and Stevens}\label{GS}
The aim of this section is to recall the method of Greenberg and Stevens \cite{SSS} to calculate analytic $\Ll$-invariant. This method has been used successfully many other times \cite{Mok,RosH,RosOC}. It is  very robust and easily adaptable to many situations in which the expected order of the trivial zero is one. We also describe certain obstacles which occur while trying to apply this method to higher order zeros.\\

 We let $K$ be a $p$-adic local field, $\oo$ its valuation ring and $\LL$  the Iwasawa algebra $\oo[[T]]$. Let $V$ be a $p$-adic Galois representation as before.\\
We  denote by $\W$ the rigid analytic space whose $\C_p$-points are $\mathrm{Hom}_{\mathrm{cont}}(\Z_p^{*},\C_p^*)$. We have a map $\Z \rightarrow \W$ defined by $k \mapsto [k]: z \rightarrow z^k$. Let us fix once and for all, if $p\neq 2$ (resp. $p=2$)  a generator $u$ of $1+p\Z_p$ (resp. $1+4\Z_2$) and a decomposition $\Z_p^* = \mu \times 1+p\Z_p$ (resp $\Z_2^* = \mu \times 1+4\Z_2$). Here $\mu= \Gm^{\mathrm{tors}}(\Z_p)$. We have the following isomorphism of rigid spaces:
\begin{align*}
 \W \cong & {\Gm^{\mathrm{tors}}(\Z_p)}^{\wedge} \times B(1,1^{-}) \\
  \kappa & \mapsto  (\kappa_{\vert_{\mu}}, \kappa(u)),
\end{align*}
where the first set has the discrete topology and the second is the rigid open unit ball around $1$. Let $0<r < \infty$, $r \in \R$, we define 
\begin{align*}
 \W(r)= \lgr (\zeta,z) \vert \zeta \in \Gm^{\mathrm{tors}}(\Z_p), \vert z - 1 \vert \leq p^{-r}  \rgr.
\end{align*}
We fix an integer $h$ and we denote by $\h_h$ the algebra of $h$-admissible distributions over $1+p\Z_p$ (or $1+4\Z_2$ if $p=2$) with values in $K$. Here we take the definition of admissibility as in \cite[\S 3]{Pan}, so measures are one-admissible, i.e. $\LL \otimes \Q_p \cong \h_1 $. The Mellin transform gives us a map
\begin{align*}
 \h_h \rightarrow \mathrm{An(0)}, 
\end{align*}
 where $\mathrm{An(0)}$ stands for the algebra of $\Q_p$-analytic, locally convergent functions around $0$. If we see $\h_h$ as a subalgebra of the ring of formal series, this amounts to $T \mapsto u^s-1$.\\ 
 
We suppose that we can construct a $p$-adic $L$-function for $L_p(s,V,D)$ and that it presents a single trivial zero.\\
We suppose also that $V$ can be deformed in a $p$-adic family $V(\kappa)$. Precisely, we suppose that we are given an affinoid $\U$, finite over  $\W(r)$. Let us write $\pi:\U \rightarrow \W(r)$. Let us denote by $\I$ the Tate algebra corresponding to $\U$. We suppose that $\I$ is integrally closed and that there exists a {\it big} Galois representation $V(\kappa)$ with values in $\I$ and a point $\kappa_0 \in \U$ such that $V=V(\kappa_0)$. \\
We define  $\U^{\mathrm{cl}}$ to be the set of $\kappa \in \U$ satisfying the followings conditions:
\begin{itemize}
 \item $\pi(\kappa)=[k]$, with $k \in \Z$,
\item $V(\kappa)$ is motivic, 
\item $V(\kappa)$ is semistable as $G_{\Q_p}$-representation,
\item $s=0$ is a critical integer for $V(\kappa)$.
\end{itemize}
 We make the following assumption on $\U^{\mathrm{cl}}$.\\
 
(\textbf{CI}) For every $n > 0$, there are infinitely many $\kappa$ in $\U^{\mathrm{cl}}$ and $s \in \Z$ such that:
\begin{itemize}
\item  $\vert \kappa -\kappa_0 \vert_{\U} < p^{-n} $ 
 \item  $s$ critical for $V(\kappa)$, 
 \item $s\equiv 0 \bmod p^n$. 
\end{itemize}

This amounts to asking that the couples $(\kappa,[s])$ in $\U \times \W$ with $s$ critical for $V(\kappa)$ accumulate at $(\kappa_0,0)$.\\

We suppose that there is a global triangulation $D(\kappa)$ of the $(\varphi,\G)$-module associated to $V(\kappa)$ \cite{Liu} and that this induces the regular submodule used to construct $L_p(s,V,D)$.\\
Under these hypotheses, it is natural to conjecture the existence of a two-variable $p$-adic $L$-function (depending on $D(\kappa)$) $L_p(\kappa,s)  \in \I\hat{\otimes}\h_h $ interpolating the $p$-adic $L$-functions of $V(\kappa)$, for all $\kappa$ in $\U^{\mathrm{cl}}$. Conjecturally \cite{PanTot,Pott}, $h$ should be defined solely in terms of the $p$-adic Hodge theory of  $V(\kappa)$ and $D(\kappa)$.\\ 

We make two hypotheses on this $p$-adic $L$-function.
\begin{itemize}
\item [{\it i})] There exists a subspace of dimension one $(\kappa,s(\kappa))$  containing $(\kappa_0,0)$ over which $L_p(\kappa,s(\kappa)) $ vanishes identically.
\item [{\it ii})] There exists an {\it improved} $p$-adic $L$-function $L_p^*(\kappa)$ in $\I$ such that $L_p(\kappa,0)=E(\kappa)L^*_p(\kappa)$, for $E(\kappa)$ a non-zero element which vanishes at $\kappa_0$.
\end{itemize}
The idea is that {\it i)} allows us to express the derivative we are interested in in terms of the ``derivative with respect to $\kappa$.'' The latter can be calculated using {\it ii)}. In general, we expect that $s(\kappa)$ is a simple function of $\pi(\kappa)$.\\

Let $\log_p(z)=-\sum_{n=1}^{\infty} \frac{{(1-z)}^n}{n}$, for ${\vert z - 1 \vert}_p < 1$ and 
\begin{align*}
\Log_p(\kappa) = \frac{\log_p(\kappa(u^r))}{\log_p(u^r)},
\end{align*}
for $r$ any integer big enough.\\
For example, in \cite{TTT,Mok} we have $s(\kappa)=\frac{1}{2}\Log_p(\pi(\kappa))$ and in \cite{RosH,RosOC} we have $s(\kappa)=\Log_p(\pi(\kappa))-1$. In the first case the line corresponds to the vanishing on the central critical line which is a consequence of the fact that the $\eps$-factor is constant in the family. In the second case, the vanishing is due to a line of trivial zeros, as all the motivic specializations present a trivial zero.\\

 The idea behind {\it ii)} is that the Euler factor which brings the trivial zero for $V$ varies analytically along $V(\kappa)$ once one fixes the cyclotomic variable. This is often the case with one dimensional deformations. If we allow deformations of $V$ in more than one variable, it is unlikely that the removed Euler factors define  $p$-analytic functions, due to the fact that {\it eigenvalues of the crystalline Frobenius do not vary $p$-adically} or equivalently,  that the Hodge-Tate weights are not constant.\\
 
 We now give  the example of families of Hilbert modular forms. For simplicity of notation, we consider a totally real field $F$ of degree $d$ where $p$ is split. Let $\boldf$ be a Hilbert modular form of weight $(k_1,\ldots, k_d)$ such that the parity of $k_i$ does not depend on $i$. We define $m=\mathrm{max}(k_i-1)$ and $v_i = \frac{m+1-k_i}{2}$.  We suppose that $\boldf$ is nearly-ordinary \cite{Hr2} and let $\boldF$ be the only Hida family to which $\boldf$ belongs.  For each $p$-adic place $\pfrak_i$ of $F$, the corresponding  Hodge-Tate weights are $(v_i, m-v_i)$. This implies that the Fourier coefficient $a_{\pfrak_i}(\boldF)$ is a $p$-adic analytic function only if it is divided by $p^{v_i}$. Unluckily, $a_{\pfrak_i}(\boldF)$ is the number which appears in the Euler type factor of the evaluation formula for the $p$-adic $L$-function of $\boldF$ or $\mathrm{Sym}^2(\boldF)$. This is why in \cite{Mok,RosH} the authors deal only with forms of parallel weight. It seems very hard to generalize the method 
of 
Greenberg and Stevens to higher order derivatives without new ideas.\\

It may happen that the Euler factor which brings the trivial zero for $V$ is (locally) identically zero on the whole family; this is the case for the symmetric square of a modular form of prime-to-$p$ conductor and more generally for the standard $L$-function of parallel weight Siegel modular forms of prime-to-$p$ level. That's why in \cite{RosH,RosOC} and in this article we can deal only with forms which are Steinberg at $p$.\\

We have seen in the examples above that $s(\kappa)$ is a linear function of the weight. Consequently, one needs to evaluate the $p$-adic $L$-function $L_p(V(\kappa),s)$ at $s$ which are big for the archimedean norm. When $s$ is not a critical integer it is quite a hard problem to evaluate the $p$-adic $L$-function. This is why we have supposed ({\bf CI}). It is not a hypothesis to be taken for granted. One example is the spinor $L$-function for genus two Siegel modular forms of any weight, which has only one critical integer.\\
The improved $p$-adic $L$-function is said so because $L^*_p(\kappa_0)$ is supposed to be exactly the special value we are interested in.\\

The rest of the section is devoted to make precise the expression ``derive with respect to $\kappa$.'' \\ 
We recall some facts about differentials. We fix a $\LL$-algebra $\I_1$. We suppose that $\I_1$ is a DVR and a $K$-algebra.  Let $\I_2$ be an integral domain and a local ring which is finite, flat and integrally closed over $\I_1$. We have the first fundamental sequence of K\"ahler differentials
\begin{align*}
 \Omega_{\I_1/K}\otimes_{\I_1} {\I_2} \rightarrow \Omega_{\I_2/K} \rightarrow \Omega_{\I_2/\I_1} \rightarrow 0.  
\end{align*}
Under the hypotheses above, we can write ${I_2} = \frac{I_1[X]}{P(X)}$. Then, every $K$-linear derivation of $\I_1$ can be extended to a derivation of $\I_2$ and \cite[Theorem 57 ii)]{Matsu} ensures us that the first arrow is injective.\\

Let $P_0$ be the prime ideal of $\I$ corresponding to the point $\kappa_0$ and $P$ the corresponding ideal in $\oo(\W(r))$. We take $\I_1={\oo(\W(r))}_{P}$ and $\I_2=\I_{P_0}$. The assumption that $\I_2$ is integrally closed is equivalent to ask that $\U$ is smooth at $\kappa_0$. In many cases, we expect that $\I_1 \rightarrow \I_2$ is \'etale; under this hypothesis, we can appeal to the fact that locally convergent series are Henselian \cite[Theorem 45.5]{Nag} to define  a morphism $\I_2 \rightarrow \mathrm{An}(k_0)$ to the ring of meromorphic functions around $k_0$ which extend the natural inclusion of $\I_1$. Once this morphism is defined, we can derive elements of $\I_2$ as if they were locally analytic functions. \\ 

There are some cases in which this morphism is known not to be \'etale; for example, for certain weight one forms \cite{DimDur,DimGha} and some critical CM forms \cite[Proposition 1]{BelCM}. (Note that in these cases no regular submodule $D$ exists.) \\

We would like to explain what we can do in the case when $\I_1 \rightarrow \I_2$ is not \'etale. We also hope that what we say will clarify the situation in the case where the morphism is \'etale.\\
We have that $ \Omega_{\LL/K}$ is a free rank $1$ $\LL$-module. Using the universal property of differentials 
\begin{align*}
 \mathrm{Hom}_{\LL}(\Omega_{\LL/K}, \LL)=\mathrm{Der}_{K}(\LL,\LL),
\end{align*}
 we shall say, by slight abuse of notation, that $\Omega_{\LL/K}$ is generated as $\LL$-module by the derivation $\frac{\textup{d}}{\textup{d}T}$. Similarly, we identify $\Omega_{\I_1/K}$ with the free $\I_1$-module generated by $\frac{\textup{d}}{\textup{d}T}$. As the first arrow in the first fundamental sequence is injective, there exists an element $d \in \Omega_{\I_2/K}$ (which we see as $K$-linear derivation from $\I_2$ to $\I_2$) which extends $\frac{\textup{d}}{\textup{d}T}$. If $\I_1 \rightarrow \I_2$ is \'etale, then $\Omega_{\I_2/\I_1}=0$ and the choice of $d$ is unique.\\
 
 Under the above hypotheses, we can then define a new analytic $\Ll$-invariant (which a priori depends on the deformation $V(\kappa)$ and $d$) by 
\begin{align*}
 \Ll^{\mathrm{an}}_d (V):= -{\log_p(u)}\left.{d({s(\kappa)})^{-1} d(E(\kappa))}\right\vert_{\kappa=\kappa_0} = -\left.d({s(\kappa)})^{-1}\frac{d(L_p(0,\kappa))}{L^*_p(\kappa)}\right\vert_{\kappa=\kappa_0}.
\end{align*}
We remark that with the notation above we have $\log_p(u)\frac{\textup{d}}{\textup{d}T}= \frac{\textup{d}}{\textup{d}s}$. We apply $d$ to $L_p(\kappa,s(\kappa)) = 0$ to obtain
\begin{align*}
0=  d (L_p(\kappa,s(\kappa)))  =  {\log_p(u)}^{-1} \left.\frac{\textup{d}}{\textup{d}s} L_p(\kappa,s)\right\vert_{s=s(\kappa)} d({s(\kappa)}) + \left.d(L_p(\kappa,s))\right\vert_{s=s(\kappa)}.
\end{align*}
Evaluating at $\kappa=\kappa_0$ we deduce 
\begin{align*}
\left.\frac{\textup{d}}{\textup{d}s} L_p(V,s)\right\vert_{s=0} = &  \Ll^{\mathrm{an}}_d (V) L^*_p(\kappa_0)
\end{align*}
and consequently 
\begin{align*}
 \Ll^{\mathrm{an}} (V) = \Ll^{\mathrm{an}}_d (V).
\end{align*}

In the cases in which $\Ll^{\mathrm{al}}$ has been calculated, namely symmetric powers of Hilbert modular forms, it is expressed in terms of the logarithmic derivative of Hecke eigenvalues at $p$ of certain finite slope families  \cite{BenLinv2,HarJo, HIwa, MokLinv}. Consequently, the above formula should allow us to prove  the equality $\Ll^{\mathrm{an}}=\Ll^{\mathrm{al}}$.\\
Moreover, the fact that the $\Ll$-invariant is a derivative of a non constant function shows that $\Ll^{\mathrm{an}} \neq 0$ outside a codimension $1$ subspace of the weight space. In this direction, positive results for a given $V$ have been obtained only in the cases of a Hecke character of a quadratic imaginary field and of an elliptic curve with $p$-adic uniformization, using a deep theorem in transcendent number theory \cite{Saint}.

 \section{Eisenstein measures}\label{Eisenstein}
In this section we first fix the notation concerning genus two Siegel forms. We then recall a normalization of certain Eisenstein series for $\gl \times \gl$ and develop a theory of $p$-adic families of modular forms (of parallel weight) on $\gl \times \gl$. Finally we construct two Eisenstein measures  which will be used in the next section to construct two $p$-adic $L$-functions.
\subsection{Siegel modular forms}
We now recall the basic theory of Siegel modular forms. We follow closely the notation of \cite{BS} and we refer to the first section of {\it loc. cit.} for more details. 
Let us denote by $\Hbb_1$ the complex upper half-plane and by $\Hbb_2$ the Siegel space for $\mathrm{GSp}_4$. We have explicitly
\begin{align*}
 \Hbb_2 = \lgr \left.Z = \left( \begin{array}{cc}
                           z_1 & z_2 \\
                           z_3 & z_4
                          \end{array}
\right) \right\vert  {Z}^t= Z \mbox{ and } \mathrm{Im}(Z) >0 \rgr.
\end{align*}
It has a natural action of $\mathrm{GSp}_4(\R)$ via fractional linear transformation; for any $M = \left( \begin{array}{cc}
                           A & B \\
                           C & D
                          \end{array}
\right)$ in $\mathrm{GSp}^+_4(\R)$ and $Z$ in $\Hbb_2$ we define 
\begin{align*}
 M(Z)= (AZ+B){(CZ+D)}^{-1}.
\end{align*}
Let $\G=\G^{(2)}_0(N)$ be the congruence subgroup of $\mathrm{Sp}_4(\Z)$ of matrices whose lower block $C$ is congruent to $0$ modulo $N$. We consider the space  $M^{(2)}_k(N,\phi)$ of scalar Siegel forms of weight $k$ and Nebentypus $\phi$:
\begin{align*}
 \lgr F: \Hbb_2 \rightarrow \C \left\vert  F(M(Z)){(CZ+D)}^{-k} = \phi(M)F(Z) \;\:\forall M \in \G, f \textnormal{ holomorphic} \right.\rgr .
\end{align*}
Each $F$ in $M_k(\G,\phi)$ admits a Fourier expansion 
\begin{align*}
 F(Z)= \sum_{T} a(T)e^{2 \pi i \mathrm{tr}(TZ)},
\end{align*}
where $T=\left(  \begin{array}{cc} T_1 & T_2 \\
T_2 & T_4                                                                                                                                                            \end{array}
\right)$ ranges over all matrices $T$ positive and semi-defined, with $T_1$, $T_4$ integer and $T_2$ half-integer.\\

 We have two embeddings (of algebraic groups) of $\Sl$ in $\mathrm{Sp}_4$:
\begin{align*}
 \Sl^{\uparrow}(R) = & \lgr \left( \begin{array}{cccc}
                           a & 0 & b & 0  \\
                           0 & 1 & 0 & 0 \\
c & 0 & d & 0  \\
                           0 & 0 & 0 & 1 
                          \end{array}  \right) \left\vert  \left( \begin{array}{cc}
                           a & b \\
                           c & d
                          \end{array}
\right) \right. \in \Sl(R)\rgr, \\
 \Sl^{\downarrow}(R) = & \lgr \left( \begin{array}{cccc}
                           1 & 0 & 0 & 0  \\
                           0 & a & 0 & b \\
0 & 0 & 1 & 0  \\
                           0 & c & 0 & d
                          \end{array}  \right) \left\vert  \left( \begin{array}{cc}
                           a & b \\
                           c & d
                          \end{array}
\right) \right.\in \Sl(R)\rgr .
\end{align*}
We can embed $\Hbb_1 \times \Hbb_1$ in $\Hbb_2$ in the following way
\begin{align*}
 (z_1, z_4) \mapsto  \left( \begin{array}{cc}
                           z_1 & 0 \\
                           0 & z_4
                          \end{array}
\right).
\end{align*}
If $\gamma$ belongs to $\Sl(\R)$, we have 
\begin{align*}
 \gamma^{\uparrow} \left( \begin{array}{cc}
                           z_1 & 0 \\
                           0 & z_4
                          \end{array}
\right) = \left( \begin{array}{cc}
                           \gamma(z_1) & 0 \\
                           0 & z_4
                          \end{array}
\right),\\
\gamma^{\downarrow} \left( \begin{array}{cc}
                           z_1 & 0 \\
                           0 &   z_4
                          \end{array}
\right) = \left( \begin{array}{cc}
                           z_1 & 0 \\
                           0 & \gamma (z_4)
                          \end{array}
\right).
\end{align*}
Consequently, evaluation at $z_2=0$ gives us a map
\begin{align*}
 M^{(2)}_k(N,\phi) \hookrightarrow M_k(N,\phi)\otimes_{\C} M_k(N,\phi),
\end{align*}
where $ M_k(N,\phi)$ denotes the space of elliptic modular forms of weight $k$, level $N$ and Nebentypus $\phi$.\\
This also induces a closed embedding of two copies of the modular curve in the Siegel threefold. We shall call its image the Igusa divisor. On points, it corresponds to abelian surfaces which decompose as the product of two elliptic curves. \\

We consider the following differential operators on $\Hbb_2$:
\begin{align*}
 \partial_1 = \frac{\partial}{\partial z_1},  \:\:\;\;  \partial_2 =\frac{1}{2} \frac{\partial}{\partial z_2},  \:\:\;\;  \partial_4 = \frac{\partial}{\partial z_4}.
\end{align*}
We define
\begin{align*}
 \Dfrak_l = &  z_2 \left(\partial_1\partial_4 - \partial_2^2 \right) - \left(l - \frac{1}{2} \right)\partial_2,\\
 \Dfrak_l^{s} = & \Dfrak_{l+s -1} \circ \ldots \circ \Dfrak_l, \\
 \mathring{\Dfrak}_l^{s} = &  \Dfrak_l^{s} \vert_{z_{2}=0}.
\end{align*}
The importance of $\mathring{\Dfrak}_l^{s}$ is that it preserves holomorphicity.\\
Let $I = \left(  \begin{array}{cc}
T_1 & T_2 \\
T_2 & T_4                                                                                                                                                            \end{array}
\right)$. We define  $\bfrak_l^{s}(I) $ to be the only homogeneous polynomial in the indeterminates $T_1,T_2,T_4$ of degree $s$ such that 
\begin{align*}
 \mathring{\Dfrak}_l^{s}e^{T_1 z_1 + 2 T_2z_2 + T_4 z_4} = \bfrak_l^{s}(I) e^{T_1 z_1 + T_4 z_4}. 
\end{align*}
We need to know a little bit more about the polynomial $\bfrak_l^{s}(I) $.
Let us write 
\begin{align*}
{\Dfrak_l^{s}}e^{T_1 z_1 + 2 T_2z_2 + T_4 z_4} = P_l^{s}(z_2,I) e^{T_1 z_1 + 2T_2z_2 + T_4 z_4}, 
\end{align*}
we have 
\begin{align}\label{recformula}
{\Dfrak_l^{s+1}}e^{\mathrm{tr}(ZI)} = \left[P_{l+s}^1(z_2,I)P_l^{s}(z_2,I) - \left( z_2  \frac{1}{4}\frac{\partial^2}{\partial z^2_{2}} + \left(l + s - \frac{1}{2} \right)\frac{1}{2}\frac{\partial}{\partial z_{2}} \right) P_l^{s}(z_2,I) \right] e^{tr(Z I)}.
\end{align}
We obtain easily 
\begin{align}\label{bP}
P_{l}^1(z_2,I)    = & \left(z_2 \mathrm{det}(I)-\left(l-\frac{1}{2}\right) T_2\right),\nonumber\\
\bfrak_l^{s}(I) = & P_l^{s}(0,I).
\end{align}
We have
\begin{align*}
 \bfrak_l^{s+1}(I)=& \bfrak_l^{s}(I)\bfrak_{l+s}^1(I) + (l+s-1/2)\partial_2 P_l^s(z_2,I)\\
   =& (l+s-1/2)(-T_2 \bfrak_l^{s}(I) + \partial_2 P_l^s(z_2,I)\vert_{z_2=0}).
\end{align*}

Let $J = \left\{ j_1,j_2, j_4 \right\}$. We shall write $\partial^{J}$ resp. $z^J$ for $\partial_1^{j_1}\partial_2^{j_2}\partial_4^{j_4}$ resp. $z_1^{j_1}z_2^{j_2 }z_4^{j_4}$. We  can write \cite[page 1381]{BS}
\begin{align}\label{polyDfrak}
 \mathring{\Dfrak}_l^{s} = \sum_{j_1 +j_2 + j_4=s}  c_{l}^{J} \partial^{J},
\end{align}
where $c_{l}^{J}=\frac{\mathring{\Dfrak}_l^{s}(z^J)}{\partial^{J}(z^J)}$.
We have easily:
\begin{align}
  {\partial^{J}(z^J)} = & j_1 ! j_2! j_4! 2^{-j_2}, \nonumber \\
 {\partial^{J}}( e^{\mathrm{tr}(ZI)}) = &  T_1^{j_1}T_2^{j_2}T_4^{j_4}( e^{\mathrm{tr}(ZI)}), \nonumber
\end{align}
 We pose
\begin{align*}
 c^{s}_l : = & c_{l}^{0,s,0},\\
 {(-1)}^{s} c^{s}_l = & \prod _{i=1}^{s}\left(l-1+s -\frac{i}{2} \right) =  2^{-s}\frac{(2l-2 +2s-1)!}{(2l-2+s-1)!}.
\end{align*}
Consequently, for $L \mid T_1$, $T_4$ and for any  positive integer $d$, we obtain  
\begin{align*}
4^{s}\bfrak_{t+1}^{s}(I) \equiv {(-1)}^{s}4^{s} \sum_{j_1+j_4 < d } c^{J}_{t+1} T_1^{j_1} T_2^{s-j_1-j_4} T_4^{j_4}\bmod L^{d}. 
\end{align*}
 
\subsection{Eisenstein series}\label{EisenGSp4}
The aim of this section is to recall certain Eisenstein series which can be used to construct the $p$-adic $L$-functions, as in \cite{BS}. In {\it loc. cit.} the authors consider certain Eisenstein series for $\mathrm{GSp}_{4g}$ whose pullback to the Igusa divisor is a \textbf{holomorphic} Siegel modular form.\\
 We now fix a (parallel weight) Siegel modular form $f$ for $\mathrm{GSp}_{2g}$. We write the standard $L$-function of $f$ as a double Petersson product between $f$ and these  Eisenstein series (see Proposition \ref{InteExpr}). When $g=1$, the standard $L$-function of $f$ coincides, up to a twist, with the symmetric square $L$-function of $f$ we are interested in.\\

In general, for an algebraic group bigger than $\gl$ it is quite hard to find the normalization of the Eisenstein series which maximizes, in a suitable sense, the $p$-adic behavior of its Fourier coefficients.
In \cite[\S 2]{BS} the authors develop a {\it twisting method} which allow them to define Eisenstein series whose Fourier coefficients satisfy Kummer's congruences when the character associated with the Eisenstein series varies $p$-adically. 
This is the key for their construction of the one variable (cyclotomic) $p$-adic $L$-function and of our two-variable $p$-adic $L$-function.\\
 When the character is trivial modulo $p$ there exists a simple relation between the twisted and the not-twisted Eisenstein series \cite[\S 6 Appendix]{BS}. To  construct the improved $p$-adic $L$-function, we shall simply interpolate the not-twisted Eisenstein series.\\
 
Let us now recall  these Fourier developments. \\
We fix a weight $k$, an integer $N$ prime to $p$ and a Nebentypus $\phi$. Let $f$ be an eigenform in $M_k(Np,\phi)$, of finite slope for the Hecke operator $U_p$. We write $N=N_{\mathrm{ss}} N_{\mathrm{nss}}$, where $N_{\mathrm{ss}}$ (resp. $N_{\mathrm{nss}}$) is divisible by all  primes $q \mid N$ such that $U_q f = 0$ (resp. $U_q f \neq 0$). 
Let $R$ be an integer coprime with $N$ and $p$ and $N_1$ a positive integer such that $N_{\mathrm{ss}} \mid N_1 \mid N$.
 We fix a Dirichlet character $\chi$ modulo $N_1Rp$ which we write  as $\chi_1\chi' \eps_1$, with $\chi_1$ defined modulo $N_1$, $\chi'$  primitive modulo $R$ and $\eps_1$ defined modulo $p$. We shall explain after Proposition \ref{InteExpr} why we introduce $\chi_1$.\\
Let $t\geq 1$ be an integer and ${\F^{t+1}\left(w,z,R^2 N^2 p^{2n},\phi,u\right)}^{(\chi)}$ be the twisted Eisenstein series of \cite[(5.3)]{BS}. We define
\begin{align*}
\h'_{L,\chi}(z,w) := & L(t+1+2s,\phi\chi) \mathring{\Dfrak}_{t+1}^{s}\left({\F^{t+1}\left(w,z,R^2 N^2 p^{2n},\phi,u\right)}^{(\chi)}\right) \left\vert^z U_{L^2} \right. \left\vert^w U_{L^2}\right. 
\end{align*}
for $s$ a non-negative integer and $p^{n} \mid L$, with $L$ a $p$-power. It is a form for $\G_0(N^2R^2p) \times \G_0(N^2R^2p)$ of weight $t+1+s$.\\
We shall sometimes choose $L=1$ and in this case the level is $N^2R^2p^{2n}$.\\
For any prime number $q$ and matrices $I$ as in the previous section, let $B_q(X,I)$ be the polynomial  of degree at most $1$ of \cite[Proposition 5.1]{BS}. 
We pose
\begin{align*}
 B(t) & = (-1)^{t+1} \frac{2^{1+2t}}{\G(3/2)} \pi^{\frac{5}{2}}.
\end{align*}
We deduce easily from \cite[Theorem 7.1]{BS} the following theorem 
\begin{theo}
 The  Eisenstein series defined above has the following Fourier development;
\begin{align*}
 \h'_{L,\chi}(z,w)\vert_{u=\frac{1}{2}-t} =& B(t) {(2 \pi i )}^{s}G\left({\chi}\right) \sum_{T_1 \geq 0} \sum_{T_4 \geq 0 } \\
 & \left( \sum_I \bfrak_{t+1}^{s}(I) {\left(\chi\right)}^{-1}(2T_2) \sum_{G \in \mathrm{GL}_2(\Z) \setminus \mathbf{D}(I)}(\phi\chi)^2(\mathrm{det}(G)){\vert \mathrm{det}(G) \vert}^{2t-1} \right. \\
& \left. \;\:\;\: L(1-t,\sigma_{-\mathrm{det}(2I)}\phi\chi)\prod_{q \mid\mathrm{det}(2G^{-t}IG^{-1}) } B_q\left(\chi\phi(q)q^{t-2},G^{-t}IG^{-1}\right) \right) e^{{2 \pi i}(T_1 z + T_2 w)} ,
\end{align*}
where the sum over $I$ runs along the matrices $\left(  \begin{array}{cc}
L^2 T_1 & T_2 \\
T_2 & L^2 T_4                                                                                                                                                            \end{array}
\right)$  positive definite and with $2 T_2 \in \Z$,  and 
\begin{align*}
 \mathbf{D}(I) = \left\{ G \in M_2(\Z) \vert G^{-t}IG^{-1} \textnormal{ is a half-integral symmetric matrix} \right\}.
\end{align*}
\end{theo}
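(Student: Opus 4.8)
The plan is to derive the stated Fourier expansion directly from the machinery of \cite{BS}, tracking carefully how the differential operator $\mathring{\Dfrak}_{t+1}^{s}$, the Hecke operators $U_{L^2}$, and the specialization $u = \frac12 - t$ interact with the Fourier coefficients recalled in \cite[Theorem 7.1]{BS}. First I would start from the Fourier development of the twisted Eisenstein series ${\F^{t+1}\left(w,z,R^2 N^2 p^{2n},\phi,u\right)}^{(\chi)}$ at the special value $u = \frac12 - t$, which by \cite[Theorem 7.1]{BS} is a sum over half-integral symmetric matrices $I$ with coefficients involving the Gauss sum $G(\chi)$, the Dirichlet $L$-value $L(1-t,\sigma_{-\det(2I)}\phi\chi)$, the local polynomials $B_q$, and the constant $B(t)$. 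The factor $L(t+1+2s,\phi\chi)$ in front of $\h'_{L,\chi}$ is exactly the one completing the Eisenstein series in the standard way, so that after applying it the "bad" Euler factors cancel and one is left with the clean product over $q \mid \det(2G^{-t}IG^{-1})$ displayed in the statement.

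Next I would apply the differential operator $\mathring{\Dfrak}_{t+1}^{s}$ termwise. By the defining property of $\bfrak_{t+1}^{s}(I)$ established earlier in the excerpt — namely $\mathring{\Dfrak}_{t+1}^{s}e^{T_1 z_1 + 2T_2 z_2 + T_4 z_4} = \bfrak_{t+1}^{s}(I)\, e^{T_1 z_1 + T_4 z_4}$ — each Fourier term $e^{2\pi i(\mathrm{tr}(ZI))}$ of weight $t+1$ gets multiplied by $\bfrak_{t+1}^{s}(I)$ and produces a weight $t+1+s$ form on $\Hbb_1 \times \Hbb_1$ with exponential $e^{2\pi i(T_1 z + T_2 w)}$; this accounts for the $\bfrak_{t+1}^{s}(I)$ factor and the $(2\pi i)^s$ appearing in $B(t)(2\pi i)^s G(\chi)$, up to bookkeeping of the normalization of $\partial_2$. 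Then I would apply $U_{L^2}$ in each variable $z$ and $w$: since $U_{L^2}$ picks out Fourier coefficients whose indices are divisible by $L^2$ and rescales, the effect is precisely to restrict the sum over $I$ to matrices of the shape $\left(\begin{smallmatrix} L^2 T_1 & T_2 \\ T_2 & L^2 T_4\end{smallmatrix}\right)$, which is how the indexing set in the statement arises. Finally the sum over $G \in \mathrm{GL}_2(\Z)\backslash\mathbf{D}(I)$ with the factor $(\phi\chi)^2(\det G)|\det G|^{2t-1}$ is just the orbit decomposition already present in \cite[Theorem 7.1]{BS}, reorganized after the $U_{L^2}$ twist.

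The main obstacle, and the step requiring the most care, is the precise matching of constants and normalizations between the operator $\mathring{\Dfrak}_{t+1}^{s}$ used here and the one in \cite{BS}, together with the combinatorial identity $(-1)^s c^s_l = 2^{-s}\frac{(2l-2+2s-1)!}{(2l-2+s-1)!}$ and the congruence $4^s\bfrak_{t+1}^s(I) \equiv (-1)^s 4^s\sum c^J_{t+1}T_1^{j_1}T_2^{s-j_1-j_4}T_4^{j_4} \bmod L^d$ recorded above; one must verify that $B(t) = (-1)^{t+1}\frac{2^{1+2t}}{\G(3/2)}\pi^{5/2}$ is exactly the constant that pops out after the normalization $|_{u = \frac12 - t}$, including the $\G(3/2)$ in the denominator coming from the archimedean integral in \cite{BS}. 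A secondary point to check is that $\mathring{\Dfrak}_{t+1}^s$ indeed commutes with $U_{L^2}$ in the relevant sense (it does, since $U_{L^2}$ acts on Fourier coefficients and $\mathring{\Dfrak}$ is diagonal on them via $\bfrak$), so the order of operations in the definition of $\h'_{L,\chi}$ is immaterial. Once these normalization constants are pinned down, the theorem follows by assembling the pieces, and I would present it as an "easy deduction" from \cite[Theorem 7.1]{BS} exactly as the excerpt announces.
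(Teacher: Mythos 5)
Your proposal follows essentially the same route as the paper: the theorem is deduced directly from \cite[Theorem 7.1]{BS}, with the operator $\mathring{\Dfrak}_{t+1}^{s}$ acting on Fourier terms via $\bfrak_{t+1}^{s}(I)$, the two copies of $U_{L^2}$ forcing the diagonal entries of $I$ to be $L^2T_1$, $L^2T_4$, and the specialization $u=\tfrac12-t$ handled exactly as you describe. The paper's own proof is just the one-line remark that, unlike \emph{loc.\ cit.}, one does not apply the slash operator by $\left(\begin{smallmatrix}1&0\\0&N^2S\end{smallmatrix}\right)$ (i.e.\ one works on $\Gamma_0(N^2S)$ rather than the opposite congruence subgroup); this normalization point is left implicit in your write-up but does not affect the substance of your argument.
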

\begin{proof}
The only difference from {\it loc. cit.} is that we do not apply $\left\vert\left(  \begin{array}{cc}
1 & 0 \\
0 & N^2S                                                                                                                                                            \end{array}
\right) \right.$.
\end{proof}
In fact, contrary to \cite{BS}, we prefer to work with $\G_0(N^2S)$ and not with the opposite congruence subgroup.\\
In particular each sum over $I$ is finite because $I$ must have positive determinant. Moreover, we can rewrite it as a sum over $T_2$, with $(2T_2,p)=1$ and $T_1T_4 - T^2_2 >0$.\\

It is proved in \cite[Theorem 8.5]{BS} that (small modifications of) these functions $\h'_{L,\chi}(z,w)$ satisfy Kummer's congruences. The key fact is what  they call the twisting method \cite[(2.18)]{BS}; the Eisenstein series ${\F^{t+1}\left(w,z,R^2N^2p^{2n},\phi,u \right)}^{(\chi)} $ are obtained  weighting ${\F^{t+1}\left(w,z,R^2N^2p^{2n},\phi,u \right)}$ with respect to $\chi$ over integral matrices modulo $NRp^n$. To ensure these Kummer's congruences, even when $p$ does not divide the conductor of $\chi$, the authors are forced to consider $\chi$ of level divisible by $p$. Using nothing more than Tamagawa's rationality theorem for $\gl$, they find the relation \cite[(7.13')]{BS}:
\begin{align*}
{\F^{t+1}\left(w,z,R^2N^2p, \phi,u\right)}^{(\chi)} = {\F^{t+1}\left(w,z,R^2N^2p, \phi,u\right)}^{(\chi_1\chi')}\left\vert \left(\mathrm{id} - p\left(  \begin{array}{cc}
1 & p \\
0 & 1                                                                                                                                                      \end{array}
\right)  \right) \right. .
\end{align*}
So the Eisenstein series we want to interpolate to construct the improved $p$-adic $L$-function is 
\begin{align*}
{\h'}^*_{L,\chi'}(z,w) := & L(t+1+2s,\phi\chi) \mathring{\Dfrak}_{t+1}^{s}\left({\F^{t+1}\left(w,z,R^2 N^2 p,\phi,u\right)}^{(\chi_1\chi')}\right). 
\end{align*}
In what follows, we shall specialize  $t=k - k_0+1$ (for $k_0$ the weight of the form in the theorem of the introduction) to construct the {\it improved} one variable $p$-adic $L$-function.\\
For each prime $q$, let us denote by $\alpha_q$ and $\beta_q$ the roots of the Hecke polynomial at $q$ associated to $f$. We define
\begin{align*}
D_q(X):=(1-\alpha_q^2X)(1-\alpha_q \beta_q X)(1-\beta_q^2 X).
\end{align*}
For each Dirichlet character $\chi$ we define 
\begin{align*}
\Ll(s,\mathrm{Sym}^2(f),\chi):=\prod_{q}{D_q(\chi(q)q^{-s})}^{-1}.
\end{align*}
This $L$-function differs from the motivic $L$-function $L(s,\mathrm{Sym}^2(\rho_f)\otimes \chi)$ by a finite number of Euler factors at prime dividing $N$.
We conclude with the integral formulation of $\Ll(s,\mathrm{Sym}^2(f),\chi)$ \cite[Theorem 3.1, Proposition 7.1 (7.13)]{BS}. 
\begin{prop}\label{InteExpr}
 Let $f$ be a form of weight $k$, Nebentypus $\phi$. We  put $t+s = k-1$, $s_1=\frac{1}{2}-t$ and $\h'= \h'_{1,\chi}(z,w)\vert_{u=s_1}$; we have
\begin{align*}
\lla f(w)\left\vert \left(  \begin{array}{cc}
0 & -1 \\
N^2p^{2n} R^2 & 0                                                                                                                                                      \end{array}
\right)   \right.,  \h'\rra_{N^2p^{2n} R^2}  = & \frac{\Omega_{k,s}({s_1}) p_{s_1}(t+1)}{\chi(-1) d_{s_1}(t+1)} {(RN_1 p^{n}) }^{s + 3 -k} {\left( \frac{N}{N_1}\right)}^{2-k} \times \\
& \times \Ll(s+1,\mathrm{Sym}^{2}(f),\chi^{-1})    f(z)\vert U_{N^2/N_1^2} 
\end{align*}
for \begin{align*}
\frac{ p_{s_1}(t+1)}{d_{s_1}(t+1)} = &  \frac{c_{t+1}^{s}}{c_{\frac{3}{2}}^{s}} = \frac{\prod_{i=1}^{s} \left(s + t -\frac{i}{2} \right)}{\prod_{i=1}^{s}\left(s - \frac{i -1}{2} \right) },\\
\Omega_{k,s}\left( \frac{1}{2} -t\right) = & 2^{2t} {(-1)}^{\frac{k}{2}} \pi\frac{\G\left(k - t \right)\G\left( k -  t -\frac{1}{2}\right)}{\G\left(\frac{3}{2}\right)} .
     \end{align*}
\end{prop}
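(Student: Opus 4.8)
The plan is to deduce Proposition \ref{InteExpr} directly from the integral formula of \cite[Theorem 3.1, Proposition 7.1]{BS} by carefully tracking the effect of the normalizations we have introduced in this section, namely: working with $\G_0(N^2S)$ instead of the opposite congruence subgroup, the specific normalization $\h'_{1,\chi}$ of the Eisenstein series (built from $\F^{t+1}$ via the operator $\mathring{\Dfrak}_{t+1}^s$ and the archimedean factor $B(t)$), and the evaluation at $u=s_1=\frac12-t$. So first I would recall the pullback--doubling identity of \cite{BS}: the Petersson product of $f$ against the restriction of the (differentiated) Siegel Eisenstein series to the Igusa divisor $\Hbb_1\times\Hbb_1$ unfolds, via the standard Rankin--Selberg/doubling computation for $\mathrm{GSp}_4$, into $\Ll(s+1,\mathrm{Sym}^2(f),\chi^{-1})$ times explicit archimedean and $p$-adic fudge factors. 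Under our conventions, with $t+s=k-1$, the archimedean integral produces exactly the factor $\Omega_{k,s}(s_1)$ written in the statement, which is the classical computation of the Petersson product of a holomorphic form against a real-analytic Eisenstein series specialized at the near-central point — I would match the $\G$-factors, the power of $\pi$, and the sign $(-1)^{k/2}$ to the expression displayed.

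Next I would account for the ratio $p_{s_1}(t+1)/d_{s_1}(t+1)$. This is purely the contribution of the differential operator $\mathring{\Dfrak}_{t+1}^s$: the coefficient $c_{t+1}^s$ appears when one applies $\mathring{\Dfrak}_{t+1}^s$ to the Eisenstein series (see \eqref{polyDfrak} and the formula ${(-1)}^{s} c^{s}_l = \prod_{i=1}^{s}(l-1+s-\tfrac i2)$), while the denominator $c_{3/2}^s$ comes from the normalization of the Siegel form on the doubling side evaluated at $u=s_1$, since $t+1$ at the ``base'' value corresponds to $l=\tfrac32$. Thus $p_{s_1}(t+1)/d_{s_1}(t+1)=c_{t+1}^s/c_{3/2}^s$, and substituting the product formula gives the quotient of the two products over $i=1,\dots,s$ displayed in the statement; this is a direct algebraic simplification. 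The power $(2\pi i)^s$ in the Fourier expansion theorem above is absorbed here as well, so I would be careful to divide it out consistently with how $\h'$ is defined.

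Then I would handle the level-lowering and conductor bookkeeping. The factors ${(RN_1p^n)}^{s+3-k}$ and ${(N/N_1)}^{2-k}$ arise from three sources: the normalization of the Eisenstein series at level $N^2R^2p^{2n}$ (scaling in the variable by the appropriate matrix changes $f$ by a power of the determinant, producing $(RN_1p^n)$-powers via the weight-$k$ slash action), the twisting by $\chi=\chi_1\chi'\eps_1$ which introduces the Gauss sum $G(\chi)$ and the Dirichlet-character normalization $\chi(-1)$ and $d_{s_1}(t+1)$ in the denominator, and the operator $U_{N^2/N_1^2}$ acting on $f(z)$ — this last being exactly why $\chi_1$ (defined modulo $N_1$) was introduced, as announced in the text. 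I would track each power of $N$, $N_1$, $R$, $p^n$ through the slash actions and the unfolding, and confirm it collapses to the two exponents stated. The $\chi(-1)$ in the denominator comes from the functional-equation/parity of the Eisenstein series under $z\mapsto -1/z$, matching the Atkin--Lehner involution $\left(\begin{smallmatrix}0&-1\\N^2p^{2n}R^2&0\end{smallmatrix}\right)$ applied to $f(w)$ on the left-hand side.

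The main obstacle I expect is the precise reconciliation of normalizations between the setup of \cite{BS} and ours — in particular keeping consistent track of (i) the Atkin--Lehner/main-involution conventions (their opposite congruence subgroup versus our $\G_0(N^2S)$), which affects where powers of the level and the sign $\chi(-1)$ land, (ii) the exact archimedean normalizing constants hidden in $\F^{t+1}$ and in $B(t)$ when specialized at $u=\tfrac12-t$, since the near-central value is a point where several $\G$-factors could a priori have poles or zeros and one must check the specialization is regular and nonzero, and (iii) ensuring the differential-operator constant is the one for $\mathrm{GSp}_4$ pulled back to $\Hbb_1\times\Hbb_1$ rather than the ``diagonal restriction'' constant, i.e.\ that \eqref{bP} and the recursion \eqref{recformula} give $c_{t+1}^s$ and not some binomial multiple of it. Everything else is a bookkeeping exercise once these conventions are pinned down, so the proof will essentially consist of citing \cite[Theorem 3.1, Proposition 7.1 (7.13)]{BS} and recording the clean-up, exactly as the one-line proof of the preceding theorem does.
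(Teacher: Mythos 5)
Your proposal follows essentially the same route as the paper's proof: one simply cites \cite[Theorem 3.1, Proposition 7.1]{BS} with the identifications $M=R^2N^2p^{2n}$, $N=N_1Rp^n$, and observes that $\frac{d_{s_1}(t+1)}{p_{s_1}(t+1)}\h'$ is the holomorphic projection of the Eisenstein series appearing there, which is exactly the normalization bookkeeping you outline. The only step you leave implicit is the final conversion of the standard (adjoint) $L$-value furnished by \cite{BS} into the symmetric square one via $\Ll(1-t,\mathrm{Ad}(f)\otimes \phi,\chi)=\Ll(k-t,\mathrm{Sym}^{2}(f),\chi)$, which the paper records explicitly but which is an elementary identity.
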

\begin{proof}
With the notation of \cite[Theorem 3.1]{BS} we have $M=R^2 N^2 p^{2n}$ and $N=N_1Rp^n$. We  have that $\frac{d_{s_1}(t+1)}{p_{s_1}(t+1)}\h'$ is the holomorphic projection of the Eisenstein series of  \cite[Theorem 3.1]{BS} (see \cite[(1.30),(2.1),(2.25)]{BS}). The final remark to make is  the following relation between the standard (or adjoint) $L$-function of $f$ and the symmetric square one: \begin{align*}
\Ll(1-t,\mathrm{Ad}(f)\otimes \phi,\chi)=\Ll(k-t,\mathrm{Sym}^{2}(f),\chi).
\end{align*}
\end{proof}
The authors of  \cite{BS} prefer to work with an auxiliary character modulo $N$ to remove all the Euler factors at bad primes of $f$, but  we do not want to do this. Still, we have  to make some assumptions on the level $\chi$. Suppose that there is a prime $q$ dividing $N$ such that $q \nmid N_1$ and $U_q f=0$, then the above formula would give us zero. That is why we introduce the character $\chi_1$ defined modulo a multiple of $N_{\mathrm{ss}}$.\\
At the level of $L$-function this does not change anything as for $q \mid N_{\mathrm{ss}}$ we have $D_q(X)=1$.\\
For $f$ as in Theorem \ref{MainThBon} we can take $N_1=1$.

\subsection{Families for $\gl \times \gl$}
The aim of this section is the construction of families of modular forms on two copies of the modular curves. Let us fix a tame level $N$, and let us denote by $X$ the compactified modular curve of level $\G_1(N)\cap \G_0(p)$. For $n\geq 2$, we shall denote by $X(p^n)$ the modular curve of level $\G_1(N)\cap \G_0(p^n)$.\\ 
We denote by $X(v)$ the tube  of ray $p^{-v}$ of the ordinary locus. We fix a $p$-adic field $K$. We recall from \cite{AIS,Pil} that, for $v$ and $r$ suitable, there exists an invertible sheaf $\omega^{\kappa}$ on $X(v)\times_{K} \W(r)$. This allows us to define families of overconvergent forms as 
\begin{align*}
 M_{\kappa}(N) & := \varinjlim_v H^0(X(v)\times_{K} \W(r),\omega^{\kappa} ).
\end{align*}
We denote by $\omega^{\kappa,(2)} $  the sheaf on $X(v)\times_{K} X(v)\times_{K} \W(r)$ obtained by base change over $\W(r)$. \\ We define 
\begin{align*} 
M^{(2)}_{\kappa,v}(N) & := H^0(X(v)\times_{K} X(v)\times_{K} \W(r),\omega^{\kappa,(2)} )\\
& =H^0(X(v)\times \W(r),\omega^{\kappa}) \hat{\otimes}_{\oo(\W(r))} H^0(X(v)\times \W(r),\omega^{\kappa});\\
M^{(2)}_{\kappa}(N) & := \varinjlim_v M^{(2)}_{\kappa,v}(N).
\end{align*}
We believe that this space should correspond to families of Siegel modular forms \cite{AIP}  of parallel weight restricted on the Igusa divisor, but for shortness of exposition we do not examine this now.\\

We have a correspondence $C_p$ above $X(v)$ defined as in \cite[\S 4.2]{Pil}. We define by fiber product the correspondence $C_p^2$ on $X(v) \times_{K} X(v)$ which we extend to $X(v)\times_{K} X(v)\times_{K} \W(r)$. This correspondence induces a Hecke operator $U^{\otimes^2}_{p}$ on $H^0(X(v)\times_{K} X(v)\times_{K} \W(r),\omega^{\kappa,(2)})$ which corresponds to $U_p \otimes U_p$.
These are potentially orthonormalizable $\oo(\W(r))$-modules \cite[\S 5.2]{Pil} and  $U^{\otimes^2}_{p}$ acts on these spaces as a completely continuous operator (or compact, in the terminology of  \cite[\S 1]{Buz}) and this allows us to write 
\begin{align}
{M^{(2)}_{\kappa}(N)}^{\leq \alpha} &= \bigoplus_{\alpha_1 + \alpha_2 \leq \alpha} {M_{\kappa}(N)}^{\leq \alpha_1} \hat{\otimes}_{\oo(\W(r))} {M_{\kappa}(N)}^{\leq \alpha_2}. \label{2varslope}
\end{align}
 Here and in what follows, for  $A$ a Banach ring, $M$ a Banach $A$-module  and $U$ a completely continuous operator on $M$, we  write $M^{\leq \alpha}$ for the finite dimensional submodule of generalized eigenspaces associated to the eigenvalues of $U$ of valuation smaller or equal than $\alpha$. We write $\mathrm{Pr}^{\leq \alpha}$ for the corresponding projection. \\
We remark \cite[Lemma 2.3.13]{UrbEig} that there exists $v >0$ such that 
\begin{align*}
 {M^{(2)}_{\kappa}(N)}^{\leq \alpha}  = {M^{(2)}_{\kappa,v'}(N)}^{\leq \alpha} 
\end{align*}
for all $0 < v ' < v$. We define similarly $M^{(2)}_{\kappa}(Np^n)$.\\

We now  use the above Eisenstein series to give examples of families. More precisely, we shall construct a two-variable measure (which will be used for the two variables $p$-adic $L$-function in the ordinary case) and a one variable measure (which will be used to construct the improved one variable $L$-function) without the ordinary assumption.\\
Let us fix $\chi=\chi_1\chi'\eps_1$ as before. We suppose  $\chi$ even. We recall the Kubota-Leopoldt $p$-adic $L$-function;
\begin{theo}
 Let $\eta$ be a even Dirichlet character. There exists a $p$-adic $L$-function $L_p(\kappa,\eta) $ satisfying for any integer $t\geq 1$ and finite-order character $\eps$ of $1+p\Z_p$ 
\begin{align*}
 L_p(\eps(u)[t],\eta) = (1-(\eps\omega^{-t}\eta)_0(p))L(1-t,\eps\omega^{-t}\eta),
\end{align*}
where $\eta_0$ stands for the primitive character associated to $\eta$. If $\eta$ is not trivial then $ L_p([t],\eta)$ is holomorphic. Otherwise, it has a simple pole at $[0]$.
\end{theo}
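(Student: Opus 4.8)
The statement is the classical theorem of Kubota and Leopoldt, in the measure-theoretic form due to Mazur and Iwasawa; the plan is to recall that construction in the normalization matching the interpolation formula above. The starting point is the evaluation $L(1-t,\psi)=-B_{t,\psi}/t$, valid for every Dirichlet character $\psi$ and every integer $t\geq 1$, where $B_{t,\psi}$ is the generalized Bernoulli number. Thus it suffices to interpolate $p$-adically the quantities $B_{t,\,\eps\omega^{-t}\eta}$ as $t$ ranges over positive integers and $\eps$ over finite-order characters of $1+p\Z_p$, together with the prescribed local factor at $p$.

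Following Mazur, one introduces on $\Z_p$ the Bernoulli distributions $\mu_k$ ($k\geq 1$), characterized by
\begin{align*}
\mu_k(a+p^n\Z_p)=p^{n(k-1)}\,B_k\!\left(\left\{\tfrac{a}{p^n}\right\}\right),
\end{align*}
where $B_k(X)$ is the $k$-th Bernoulli polynomial and $\{\,\cdot\,\}\in[0,1)$ the fractional part, and regularizes them: fixing an integer $c>1$ prime to $p$ and to the conductor of $\eta$, and writing $m_c$ for multiplication by $c$ on $\Z_p$, the distribution
\begin{align*}
\mu_{1,c}:=\mu_1-c\,(m_c)_{*}\mu_1
\end{align*}
is a \emph{bounded} measure on $\Z_p$, hence --- since $c\in\Z_p^{\times}$ --- a measure on $\Z_p^{\times}$ by restriction. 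Establishing this boundedness, equivalently the relevant Kummer-type congruences among generalized Bernoulli numbers, is the technical heart of the construction and the step I expect to be the main obstacle; everything that follows is formal.

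One then twists by the tame character $\eta$ and sets
\begin{align*}
L_p(\kappa,\eta):=\frac{1}{\,1-\eta(c)\,\kappa(\langle c\rangle)\,}\int_{\Z_p^{\times}}\kappa(\langle x\rangle)\,x^{-1}\,\eta(x)\ d\mu_{1,c}(x),
\end{align*}
where $\langle x\rangle\in 1+p\Z_p$ denotes the projection of $x$ along the fixed decomposition $\Z_p^{\times}=\mu\times(1+p\Z_p)$. A direct moment computation with Bernoulli polynomials shows that, for $\kappa=\eps(u)[t]$, the two regularizing factors cancel and one is left with exactly
\begin{align*}
L_p(\eps(u)[t],\eta)=\bigl(1-(\eps\omega^{-t}\eta)_0(p)\bigr)\,L(1-t,\eps\omega^{-t}\eta);
\end{align*}
the Teichm\"uller twist $\omega^{-t}$ arises simply because $\kappa$ sees only the projection $\langle x\rangle=x\,\omega(x)^{-1}$ of $x$. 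That $L_p(\kappa,\eta)$ does not depend on the auxiliary $c$ follows from the usual comparison of two choices $c,c'$.

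It remains to read off the behavior in $\kappa$. The integral above is an element of $\oo[[T]]$, with $T=\kappa(u)-1$, and the normalizing factor $1-\eta(c)\,\kappa(\langle c\rangle)$ is a nonzero element of $\oo[[T]]$ whose zeros on $\W$, if any, occur only where $\kappa(\langle c\rangle)$ equals the root of unity $\eta(c)^{-1}$. If $\eta$ is nontrivial, Iwasawa's argument shows the integral is divisible by this factor in $\oo[[T]]$ --- equivalently, that the associated distribution on $\Z_p^{\times}$ is already a genuine measure --- so that $L_p(-,\eta)$ is holomorphic on all of $\W$. If $\eta$ is trivial this divisibility fails: then $\eta(c)=1$, and for $c$ chosen so that $\langle c\rangle$ is a topological generator of $1+p\Z_p$ the factor $1-\kappa(\langle c\rangle)$ vanishes at $\kappa=[0]$ to order exactly one, while the integral does not; hence $L_p(-,1)$ has a simple pole at $[0]$, the avatar in this normalization of the classical simple pole of the $p$-adic zeta function.
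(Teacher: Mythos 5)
The paper offers no proof of this statement at all: it is the classical Kubota--Leopoldt theorem, recalled as a known input so that the Dirichlet $L$-values appearing in the Fourier coefficients of the Eisenstein series can be interpolated. So there is no internal argument to compare with, and your plan --- Mazur's Bernoulli distributions $\mu_k$, the $c$-regularization making $\mu_{1,c}$ a bounded measure (the Kummer congruences being the one non-formal input), restriction to $\Z_p^{\times}$, twisting by $\eta$, cancellation of the regularizing factor, independence of $c$, and Iwasawa's dichotomy giving holomorphy for nontrivial $\eta$ versus the simple pole at $[0]$ for trivial $\eta$ --- is precisely the standard construction the paper implicitly invokes, and in outline it is the right one.

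There is, however, one step that does not come out as you assert: the moment computation. With your definition $L_p(\kappa,\eta)=\bigl(1-\eta(c)\kappa(\langle c\rangle)\bigr)^{-1}\int_{\Z_p^{\times}}\kappa(\langle x\rangle)x^{-1}\eta(x)\,d\mu_{1,c}(x)$, at $\kappa=\eps(u)[t]$ the integrand is $\chi(x)x^{t-1}$ with $\chi=\eps\omega^{-t}\eta$, and the classical evaluation of these moments is $(1-\chi(c)c^{t})\,(1-\chi_0(p)p^{t-1})\,L(1-t,\chi)$ up to sign; after cancelling $1-\chi(c)c^{t}=1-\eta(c)\kappa(\langle c\rangle)$ you are left with the Euler factor $(1-(\eps\omega^{-t}\eta)_0(p)\,p^{t-1})$, not $(1-(\eps\omega^{-t}\eta)_0(p))$ as displayed in the statement. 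So your construction proves the standard interpolation formula, which is surely what is meant (taken literally, the displayed factor cannot be realized by a single analytic function: it agrees with the true one exactly at the points where $p$ divides the conductor of $\eps\omega^{-t}\eta$, where both are $1$, and differs on the remaining dense set of classical points), but the claim that ``the two regularizing factors cancel and one is left with exactly'' the stated display is not correct as a computation; you should derive the factor with $p^{t-1}$ and note the normalization discrepancy, or else change the measure you integrate against. The remaining points --- independence of $c$, holomorphy for nontrivial $\eta$, and the simple pole at $[0]$ coming from the simple zero of $1-\kappa(\langle c\rangle)$ against a nonvanishing integral --- are fine and standard (with the usual modification $1+4\Z_2$ when $p=2$).
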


We can consequently define a $p$-adic analytic function interpolating the Fourier coefficients of the Eisenstein series defined in the previous section; for any $z$ in  $\Z_p^*$, we define $l_z=\frac{\log_p(z)}{\log_p(u)}$. We define also 
\begin{align*}
 a_{T_1,T_4,L}(\kappa,\kappa')= & \left( \sum_I \kappa(u_{2 T_2}) {\chi}^{-1}(2T_2) \times \right. \\
 & \left. \times \sum_{G \in \mathrm{GL}_2(\Z) \setminus \mathbf{D}(I)}(\phi\chi)^2(\mathrm{det}(G)){\vert \mathrm{det}(G) \vert}^{-1}{\kappa'}^2(u^{l_{\vert \mathrm{det}(G) \vert}}) \right. \\
& \left. \;\:\;\: L_p(\kappa',\sigma_{-\mathrm{det}(2I)}\phi\chi)\prod_{q \mid\mathrm{det}(2G^{-t}IG^{-1}) } B_q\left(\phi(q)\kappa'(u^{l_q})q^{-2},G^{-t}IG^{-1}\right) \right).
\end{align*}
If $p=2$, then ${\chi}^{-1}(2T_2)$ vanishes when $T_2$ is an integer, so the above above sum is only on half-integral $T_2$ and $\kappa(u_{2 T_2})$ is a well-define $2$-analytic function. Moreover, it is $2$-integral.\\
We recall that if $p^j \mid T_1,T_4$  we have \cite[(1.21, 1.34)]{BS} 
\begin{align*}
 4^{s}\bfrak_{t+1}^{s}(I) \equiv {(-1)}^{s}4^{s}  c^{s}_{t+1}  T_2^{s} \bmod p^{d},
\end{align*}
for $s=k-t-1$. Consequently, if we define 
\begin{align*}
 \h_L(\kappa,\kappa') = \sum_{T_1 \geq 0} \sum_{T_4 \geq 0 }  a_{T_1,T_4,L}(\kappa[-1]{\kappa'}^{-1},\kappa') q_1^{T_1}q^{T_4}_2
\end{align*}
we have, 
\begin{align*}
{(-1)}^{s} 2^{s} c^{s}_{t+1} A  \h_L([k],\eps[t]) \equiv 2^{s} \h'_{L,\chi\eps\omega^{-s}}(z,w) \bmod L^r,
\end{align*}
with 
\begin{align*}
 A= A(t,k,\eps)=  B(t) {(2 \pi i )}^{s} G\left({\chi\eps\omega^{-s}}\right).
\end{align*}

We have exactly as in \cite[Definition 1.7]{Pan} the following lemma;
\begin{lemma}\label{lemmaprojinfty}
 There exists a projector \begin{align*}
 \mathrm{Pr}_{\infty}^{\leq \alpha} : \bigcup_n M^{(2)}_{\kappa}(Np^n) \rightarrow {M^{(2)}_{\kappa}(N)}^{\leq \alpha}
                          \end{align*}
which on $ M^{(2)}_{\kappa}(Np^n)$ is ${(U^{\otimes^2}_{p})}^{-i}\mathrm{Pr}^{\leq \alpha}{(U^{\otimes^2}_{p})}^{i}$, independent of  $i \geq n$.
\end{lemma}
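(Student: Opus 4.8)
The plan is to follow the argument of \cite[Definition 1.7]{Pan} verbatim, transported to the spaces $M^{(2)}_{\kappa}(Np^n)$ of modular forms on two copies of the modular curve of level $\Gamma_1(N)\cap\Gamma_0(p^n)$. First I would record the input just established: $U^{\otimes^2}_{p}$ is completely continuous on each $M^{(2)}_{\kappa}(Np^n)$, so it admits a slope-$\leq\alpha$ decomposition, the projector $\mathrm{Pr}^{\leq\alpha}$ onto the finite-dimensional piece $M^{(2)}_{\kappa}(Np^n)^{\leq\alpha}$ exists and commutes with $U^{\otimes^2}_{p}$, and $U^{\otimes^2}_{p}$ restricts to an automorphism of that piece. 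This already makes $(U^{\otimes^2}_{p})^{-i}\mathrm{Pr}^{\leq\alpha}(U^{\otimes^2}_{p})^{i}$ meaningful for every $i$: apply $(U^{\otimes^2}_{p})^{i}$, project to slope $\leq\alpha$, then invert $(U^{\otimes^2}_{p})^{i}$ on the finite-dimensional image.

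Next I would bring in the degeneracy maps. The two projections $X(p^n)\to X(p^{n-1})$ (for $n\geq2$, with $X(p)=X$) induce maps between the one-variable spaces, and, taking the fibre product of two copies over $\W(r)$, degeneracy maps $\iota_{m,n}\colon M^{(2)}_{\kappa}(Np^m)\hookrightarrow M^{(2)}_{\kappa}(Np^n)$ for $m\le n$; these are the transition maps of the colimit $\bigcup_n M^{(2)}_{\kappa}(Np^n)$ and commute with $U^{\otimes^2}_{p}$. The key geometric step is then to show, using the explicit model of the correspondence $C_p$ over $X(v)$ from \cite[\S4.2]{Pil} and its fibre square $C_p^2$, that at level $Np^n$ the correspondence $C_p^2$ factors (up to the degeneracy maps) through level $Np^{n-1}$; iterating $n-1$ times, $(U^{\otimes^2}_{p})^{i}$ with $i\geq n-1$ carries $M^{(2)}_{\kappa}(Np^n)$ into $\iota_{1,n}(M^{(2)}_{\kappa}(N))$. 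Combined with the first paragraph, this shows that for $i\geq n$ the composite $(U^{\otimes^2}_{p})^{-i}\mathrm{Pr}^{\leq\alpha}(U^{\otimes^2}_{p})^{i}$ takes values in $\iota_{1,n}(M^{(2)}_{\kappa}(N)^{\leq\alpha})$, which I identify with $M^{(2)}_{\kappa}(N)^{\leq\alpha}$; this is the definition of $\mathrm{Pr}_{\infty}^{\leq\alpha}$ on that piece. I should stress that this step is doing real work: without the factorisation through lower $p$-level the composite would simply equal $\mathrm{Pr}^{\leq\alpha}$ at level $Np^n$, with the wrong target.

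After that the remaining points are formal. For independence of $i\geq n$: once $\mathrm{Pr}^{\leq\alpha}(U^{\otimes^2}_{p})^{i}f$ lies in the level-$N$ subspace, where $U^{\otimes^2}_{p}$ is invertible, passing from $i$ to $i+1$ inserts $U^{\otimes^2}_{p}$ and then its inverse and changes nothing. For the projector property: on $\iota_{1,n}(M^{(2)}_{\kappa}(N)^{\leq\alpha})$ the operator $\mathrm{Pr}^{\leq\alpha}$ acts as the identity and the powers of $U^{\otimes^2}_{p}$ cancel, so $\mathrm{Pr}_{\infty}^{\leq\alpha}$ restricts to the identity there, is idempotent, and has image exactly $M^{(2)}_{\kappa}(N)^{\leq\alpha}$. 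For well-definedness on the whole colimit: since $U^{\otimes^2}_{p}$ commutes with the $\iota_{m,n}$ and the threshold on $i$ may be taken uniformly, the maps defined at the various levels agree on the images of the degeneracy maps and glue.

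The main obstacle is precisely the geometric step in the second paragraph — making the factorisation of $C_p^2$ through lower $p$-level explicit in the overconvergent $\gl\times\gl$ setting and checking its compatibility with $U^{\otimes^2}_{p}$ and with the slope projections. A safer route, which I would actually take, is to bypass it and deduce the lemma from the one-variable statement of \cite[Definition 1.7]{Pan}: the decomposition (\ref{2varslope}) lets one set $\mathrm{Pr}_{\infty}^{\leq\alpha}=\bigoplus_{\alpha_1+\alpha_2\leq\alpha}\mathrm{Pr}_{\infty}^{\leq\alpha_1}\,\hat{\otimes}_{\oo(\W(r))}\,\mathrm{Pr}_{\infty}^{\leq\alpha_2}$, where $\mathrm{Pr}_{\infty}^{\leq\alpha_i}$ is Panchishkin's one-variable projector applied to each factor of $M^{(2)}_{\kappa}(Np^n)=M_{\kappa}(Np^n)\,\hat\otimes\,M_{\kappa}(Np^n)$, reducing everything to his lemma on $X$ together with the elementary bookkeeping of the tensor-product slope filtration.
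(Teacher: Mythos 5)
Your proposal is correct and takes essentially the same route as the paper, which offers no argument beyond invoking \cite[Definition 1.7]{Pan}: your main argument is precisely that level-lowering-by-$U_p$ plus invertibility-on-the-finite-slope-part argument transported to the two-variable setting, with the key factorisation of the correspondence through lower $p$-level correctly identified as the substantive step. The only point to watch is in your alternative route, where the sum $\bigoplus_{\alpha_1+\alpha_2\leq\alpha}\mathrm{Pr}_{\infty}^{\leq\alpha_1}\hat{\otimes}\mathrm{Pr}_{\infty}^{\leq\alpha_2}$ must be indexed by exact slopes (or otherwise corrected for the overlapping images) to define a genuine projector, a bookkeeping issue you acknowledge and which is also implicit in the paper's own decomposition (\ref{2varslope}).
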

When $\alpha=0$, we shall write $ \mathrm{Pr}_{\infty}^{\mathrm{ord}}$.\\

We shall now construct  the improved Eisenstein family. Fix $k_0 \geq 2$ and $s_0=k_0 -2 $. It is easy to see from (\ref{bP}) and (\ref{recformula}) that when $k$ varies $p$-adically the value $\bfrak_{k}^{s_0}(I)$ varies $p$-adically analytic too. We define $\bfrak_{\kappa}^{s_0}(I) $ to be the only polynomial in  $\oo(\W(r))[T_i]$, homogeneous of degree $s_0$ such that $\bfrak_{[t]}^{s_0}(I)=\bfrak_{t+1}^{s_0}(I)$. Its coefficients are products of $\Log_p(\kappa[i])$. We let  $\chi=\chi_1\chi'\omega^{k_0-2}$ and we define
\begin{align*}
 a^*_{T_1,T_4}(\kappa)= & \left( \sum_I \bfrak_{\kappa}^{s_0}(I) {(\chi'\chi_1)}^{-1}(2T_2) \sum_{G \in \mathrm{GL}_2(\Z) \setminus \mathbf{D}(I)}(\phi\chi'\chi_1)^2(\mathrm{det}(G)){\vert \mathrm{det}(G) \vert}^{-1}{\kappa}(u^{l_{2\vert \mathrm{det}(G) \vert}}) \right. \times \\
& \left. \;\:\;\: \times L_p(\kappa,\sigma_{-\mathrm{det}(2I)}\phi\chi'\chi_1)\prod_{q \mid\mathrm{det}(2G^{-t}IG^{-1}) } B_q\left(\phi\chi'\chi_1(q)\kappa(u^{l_q})q^{-2},G^{-t}IG^{-1}\right) \right).
\end{align*}

We now construct another $p$-adic family of Eisenstein series;
\begin{align*}
  \h^*(\kappa)=\mathrm{Pr}^{\leq \alpha}{(-1)}^{k_0}\sum_{T_1 \geq 0} \sum_{T_4 \geq 0 }  a^*_{T_1,T_4}(\kappa[1-k_0]) q_1^{T_1}q^{T_2}_2.
\end{align*}

\begin{prop}
Suppose $\alpha=0$. We have a $p$-adic family $\h(\kappa,\kappa') \in {M^{(2)}_{\kappa}(N^2R^2)}^{\mathrm{ord}}$ such that 
\begin{align*}
 \h([k],\eps[t]) = \frac{1}{{(-1)}^{s} c^{s}_{t+1} A }  {(U^{\otimes^2}_{p})}^{-2i}\mathrm{Pr}^{\mathrm{ord}}  \h'_{p^i,\chi\eps\omega^{-s}}(z,w).
\end{align*}
For any $\alpha$, we have $\h^*(\kappa) \in {M^{(2)}_{\kappa}(N^2R^2)}^{\leq \alpha}$ such that
\begin{align*}
 \h^*([k]) = {A^*}^{-1}{(-1)}^{k_0}\mathrm{Pr}^{\leq \alpha}{\h'}^*_{1,\chi_1\chi'}(z,w)\vert_{u=\frac{1}{2}-k+k_0-1},
\end{align*}
for
\begin{align*}
 A^{*}= B(k-k_0+1) {(2 \pi i )}^{k_0-2} G(\chi'\chi_1).
\end{align*}
\end{prop}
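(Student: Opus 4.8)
The plan is to establish that the formal $q$-expansions $\h(\kappa,\kappa')$ and $\h^*(\kappa)$ actually define families of $p$-adic modular forms in the stated spaces, and that their specializations match the classical Eisenstein series constructed in the previous section. The strategy is the standard one for constructing $p$-adic families via Eisenstein measures: first produce a formal $q$-expansion with coefficients in $\I$ (here $\oo(\W(r))$ or the appropriate Tate algebra), then use the congruences of the previous section together with Kummer-type congruences for the Kubota--Leopoldt $p$-adic $L$-function to show that the specializations at arithmetic points $([k],\eps[t])$ (resp. $[k]$) lie — up to explicit nonzero scalars — in the classical spaces $M_k^{(2)}(Np^n)$, and finally invoke a density/glueing argument (the analogue of \cite[Definition 1.7, Lemma 1.8]{Pan}, or the fact that $M^{(2)}_{\kappa,v}(N)$ is a potentially orthonormalizable $\oo(\W(r))$-module on which $q$-expansion is injective) to conclude that the formal family is a genuine section of $\omega^{\kappa,(2)}$.

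Concretely, I would proceed as follows. First, by the Theorem on the Fourier development of $\h'_{L,\chi}$ and the congruence $4^s\bfrak_{t+1}^s(I)\equiv (-1)^s4^s c^s_{t+1}T_2^s \bmod p^d$ when $p^j\mid T_1,T_4$ (valid since $s=k-t-1$), together with the defining formula for $a_{T_1,T_4,L}(\kappa,\kappa')$, one gets the displayed congruence ${(-1)}^{s}2^{s}c^{s}_{t+1}A\,\h_L([k],\eps[t])\equiv 2^s\h'_{L,\chi\eps\omega^{-s}}(z,w)\bmod L^r$. Taking $L=p^i$ with $i\geq n$ large and applying $\mathrm{Pr}^{\mathrm{ord}}$, the classical form $\h'_{p^i,\chi\eps\omega^{-s}}$ becomes, after ${(U^{\otimes^2}_p)}^{-2i}$, independent of $i$ by Lemma \ref{lemmaprojinfty}; this gives the interpolation formula for $\h([k],\eps[t])$. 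The fact that these specializations are classical ordinary forms of level $N^2R^2$ and tame level dividing $N^2R^2$, together with the congruences mod $p^r$ for varying $r$, shows via the usual argument that the coefficients $q_1^{T_1}q_2^{T_4}$-expansion defines an element of $\varprojlim M^{(2)}_{\kappa}(N^2R^2)^{\mathrm{ord}}/p^r$, hence of ${M^{(2)}_{\kappa}(N^2R^2)}^{\mathrm{ord}}$ because the ordinary projection lands in a finite, hence $p$-adically complete, module. For the improved family $\h^*(\kappa)$, the argument is simpler: there is no cyclotomic variable, $s_0=k_0-2$ is fixed, $\bfrak^{s_0}_\kappa(I)$ is genuinely a polynomial with coefficients products of $\Log_p(\kappa[i])$ hence rigid-analytic in $\kappa$, and the coefficients $a^*_{T_1,T_4}(\kappa)$ interpolate the Fourier coefficients of ${\h'}^*_{1,\chi_1\chi'}$ directly using the Kubota--Leopoldt interpolation $L_p(\eps(u)[t],\eta)=(1-(\eps\omega^{-t}\eta)_0(p))L(1-t,\eps\omega^{-t}\eta)$; applying $\mathrm{Pr}^{\leq\alpha}$ (which is defined and continuous on the relevant Banach module by (\ref{2varslope}) and the complete continuity of $U^{\otimes^2}_p$) produces the claimed element of ${M^{(2)}_\kappa(N^2R^2)}^{\leq\alpha}$, and specialization at $[k]$ matches ${\h'}^*_{1,\chi_1\chi'}(z,w)|_{u=\frac12-k+k_0-1}$ up to the scalar ${A^*}^{-1}(-1)^{k_0}$ coming from $B(k-k_0+1)$, $(2\pi i)^{k_0-2}$ and $G(\chi'\chi_1)$.

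The main obstacle I expect is the passage from "formal $q$-expansion satisfying Kummer congruences at arithmetic points" to "honest section of the sheaf $\omega^{\kappa,(2)}$ over $X(v)\times X(v)\times\W(r)$" — i.e. the $q$-expansion principle in families. One must know that the arithmetic specializations $([k],\eps[t])$ are Zariski-dense in $\U\times\W$ (or at least dense enough that a bounded family congruent to classical forms at all of them is forced to be classical), and one must control the level: the series $\h_L$ a priori has level $N^2R^2p^{2n}$ with $n$ growing, which is why one introduces $U^{\otimes^2}_p$-depletion and the projector $\mathrm{Pr}_\infty^{\leq\alpha}$ of Lemma \ref{lemmaprojinfty} to descend to fixed tame level $N^2R^2$; checking that this descent is compatible with the family structure (i.e. that $\mathrm{Pr}_\infty^{\leq\alpha}$ extends to a continuous $\oo(\W(r))$-linear operator on the family, not just fibrewise) is the technical heart, and it is precisely the content of the cited \cite[Definition 1.7]{Pan} adapted to $\gl\times\gl$. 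In the ordinary case ($\alpha=0$) this is cleanest because the ordinary subspace is finite and stable; in the finite-slope case one uses (\ref{2varslope}) and \cite[Lemma 2.3.13]{UrbEig} to work over a fixed $v$. Once these structural points are in place, verifying the two displayed interpolation formulas is a bookkeeping exercise in tracking the scalars $A$, $A^*$, $c^s_{t+1}$ and the powers of $U^{\otimes^2}_p$ through the congruences already established.
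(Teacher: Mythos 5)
Your proposal is correct and follows essentially the same route as the paper's proof: specialize the congruence at $L=p^i$, use the projector of Lemma \ref{lemmaprojinfty} together with the integrality (norm $1$) of $U_p^{-1}$ on the ordinary part so that the congruences survive ${(U^{\otimes^2}_{p})}^{-2i}\mathrm{Pr}^{\mathrm{ord}}$, and then pass from the coefficient-wise limit of $q$-expansions to an honest family of finite slope forms (the paper does this by the method of \cite[Corollary 3.4.7]{UrbNholo} and \cite[Proposition 2.17]{RosOC}, noting that for bounded slope the overconvergence radius $v$ can be fixed, which is your density/glueing step). For the improved family the paper likewise argues by direct interpolation, the only point being that $\Log_p(\kappa)$ is bounded on $\W(r)$ so the coefficients $a^*_{T_1,T_4}(\kappa)$ are bounded, exactly as you indicate.
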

\begin{proof}
 From its own definition we have ${(U^{\otimes^2}_{p})}^{2j}\h'_{1,\chi}(z,w) =\h'_{p^j,\chi}(z,w)$. We define 
\begin{align*}
 \h(\kappa,\kappa')= \varinjlim_{j} {(U^{\otimes^2}_{p})}^{-2j}\mathrm{Pr}^{\mathrm{ord}}\h_{p^j,\chi}(\kappa,\kappa').
\end{align*}
With Lemma \ref{lemmaprojinfty} and the previous remark we obtain
\begin{align*}
4^{s}{(U^{\otimes^2}_{p})}^{-2i}\mathrm{Pr}^{\mathrm{ord}} \h'_{p^i,\chi\eps\omega^{-s}}(z,w) = & 4^{s}{(U^{\otimes^2}_{p})}^{-2j}\mathrm{Pr}^{\mathrm{ord}} \h'_{p^j,\chi\eps\omega^{-s}}(z,w) \\
\equiv & A {(-1)}^s 4^{s} c^{s}_{t}  {(U^{\otimes^2}_{p})}^{-2j}\mathrm{Pr}^{\mathrm{ord}} \h_{p^j}([k],\eps[t])   \bmod p^{2j},
\end{align*}
as $U_p^{-1}$ acts on the ordinary part with norm $1$.\\
The punctual limit is then a well-defined classical, finite slope form. By the same method of proof of  \cite[Corollary 3.4.7]{UrbNholo} or \cite[Proposition 2.17]{RosOC} (in particular, recall that when the slope is bounded, the ray of overconvergence $v$ can be fixed), we see that this $q$-expansion defines a  family of finite slope forms.\\
For the second family, we remark that $\Log_p(\kappa)$ is bounded on $\W(r)$, for any $r$ and we reason as above.
\end{proof}
We want to explain briefly why the construction above works in the ordinary setting and not in the finite slope one.\\
 It is slightly complicated to explicitly  calculate the polynomial $\bfrak_{t+1}^{s}(I)$ and in particulat to show that they vary $p$-adically when varying $s$. But we know that $\mathring{\Dfrak_l^{s}}$ is an homogeneous polynomial in $\partial_i$ of degree $s$. Suppose for now $\alpha=0$, {\it i.e.} we are in the ordinary case. We have a single monomial of $\mathring{\Dfrak_l^{s}}$ which does not involve $\partial_1$ and $\partial_4$, namely $ c^{s}_{t+1} \partial_2^{s}$. Consequently, in $\bfrak_{t+1}^{s}(I)$ there is a single monomial without $T_1$ and $T_4$.\\
 When  the entries on the diagonal of $I$ are divisible by $p^i$, $4^{s}\bfrak_{t+1}^{s}(I)$ reduces to $ (-1)^s 4^{s} c^{s}_{t+1}$ modulo $p^i$.  Applying $U^{\otimes^2}_{p}$ many times ensures us that $T_1$ and $T_4$ are very divisible by $p$. Speaking $p$-adically, we approximate $\mathring{\Dfrak_l^{s}}$ by $\partial_2$ (multiplied by a constant). The more times we apply $U^{\otimes^2}_{p}$,  the better we can approximate $p$-adically $\mathring{\Dfrak_l^{s}}$ by $\partial^s_2$. At the limit, we obtain equality.\\
 
 For $\alpha >0$,  when we apply ${U^{\otimes^2}_{p}}^{-1}$ we introduce denominators of the order of $p^{\alpha}$. In order to construct a two-variable family, we should approximate  $\mathring{\Dfrak_l^{s}}$ with higher precision. For example, it would be enough to consider the monomials $\partial^J$ for $j_1+j_4 \leq \alpha$. In fact,  $\partial_1 $ and $\partial_4$ increase the slope of $U_p$; we have the relation 
 \begin{align*}
  \left(U_p^{\otimes^2}\partial_1 \partial_4\right)_{\vert z_2=0} = p^2\left(\partial_1 \partial_4 U_p^{\otimes^2} \right)_{\vert z_2=0}.
 \end{align*}
Unluckily, it seems quite hard to determine explicitly the coefficients $c_{t+1}^J$ of (\ref{polyDfrak}) or even show that they satisfies some $p$-adic congruences as done in \cite{CP,Gorsse,RosOC}. We guess that it could be easier to interpolate $p$-adically the projection to the ordinary locus of the Eisenstein series of \cite[Theorem 3.1]{BS} rather than the holomorphic projection as we are doing here (see also \cite[\S 3.4.5]{UrbNholo} for the case of nearly holomorphic forms for $\gl$).\\
 This should remind the reader of the fact that on $p$-adic forms  the Maa\ss{}-Shimura operator and Dwork $\Theta$-operator coincide \cite[\S 3.2]{UrbNholo}.

\section{$p$-adic $L$-functions}\label{padicLfunctions}
We now construct  two $p$-adic $L$-functions using the above Eisenstein measure: the two-variable one in the ordinary case and the improved one for any finite slope. Necessary for the construction is a $p$-adic Petersson product \cite[\S 6]{Pan} which we now recall. We fix a family $F=F(\kappa)$ of finite slope modular forms which we suppose primitive, i.e. all its classical specialization are primitive forms, of prime-to-$p$ conductor $N$. We consider characters $\chi$, $\chi'$ and $\chi_1$ as in Section \ref{EisenGSp4}. We keep the same decomposition for $N$ (because the local behavior at $q$ is constant along the family). We shall write $N_0$ for the conductor of $\chi_1$.
\begin{center}
 (\textbf{notCM}) We suppose that $F$ has not complex multiplication by $\chi$.
\end{center}
  Let $\Cc_F$ be the corresponding  irreducible component of the Coleman-Mazur eigencurve. It is finite flat over $\W(r)$, for a certain $r$. Let $\I$ be the coefficients ring of $F$ and $\K$ its field of fraction. For a classical form $f$, let us denote by $f^c$ the complex conjugated form. We denote by $\tau_N$ the Atkin-Lehner involution of level $N$ normalized as in \cite[h4]{H6}. When the level is clear from the context, we shall simply write $\tau$. \\
Standard linear algebra allows us to define a $\K$-linear form $l_F$ on ${M_{\kappa}(N)}^{\leq \alpha}\otimes_{\I}\K$ with the following property \cite[Proposition 6.7]{Pan};
\begin{prop}
 For all $G(\kappa)$ in ${M_{\kappa}(N)}^{\leq \alpha}\otimes_{\I}\K$ and any $\kappa_0$ classical point, we have
\begin{align*}
 l_F(G(\kappa))_{\vert \kappa=\kappa_0} = \frac{\lla F(\kappa_0)^{c}|\tau_{Np}, G(\kappa_0) \rra_{Np}}{\lla F(\kappa_0)^{c}|\tau_{Np}, F(\kappa_0) \rra_{Np}}.
\end{align*}
\end{prop}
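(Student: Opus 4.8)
The plan is to build $l_F$ as the ``$F$-coordinate'' functional coming from the Hecke decomposition of the family, and then to identify it, fibre by fibre, with the normalized $\tau_{Np}$-twisted Petersson functional attached to $F(\kappa_0)$.

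\emph{Construction of $l_F$.} Let $\T$ be the $\oo(\W(r))$-subalgebra of $\mathrm{End}\bigl({M_{\kappa}(N)}^{\leq \alpha}\bigr)$ generated by $U_p$ and the $T_q$, $q\nmid Np$; it is finite over $\oo(\W(r))$, and the family $F$ corresponds to a surjection $\lambda_F\colon\T\to\I$, with $\I$ a normal domain, finite and flat over $\oo(\W(r))$ and $\K=\mathrm{Frac}(\I)$. Since $\I$ is finite and torsion-free over $\oo(\W(r))$ we have $\I\otimes_{\oo(\W(r))}\mathrm{Frac}\bigl(\oo(\W(r))\bigr)=\K$, so $\lambda_F$ realizes $\K$ as a quotient of the Artinian algebra $\T\otimes_{\oo(\W(r))}\mathrm{Frac}(\oo(\W(r)))$; the complementary factor is cut out by an idempotent, which after clearing denominators over $\I$ gives an idempotent $e_F\in\T\otimes_{\oo(\W(r))}\K$ with $(\lambda_F\otimes\mathrm{id})(e_F)=1$. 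Because $F$ is primitive, strong multiplicity one for level-$N$ newforms shows that for each classical $\kappa_0$ the generalized $\lambda_{F(\kappa_0)}$-eigenspace in the classical specialization of ${M_{\kappa}(N)}^{\leq \alpha}$ is a line; together with the usual flatness and base-change formalism of \cite[\S 6]{Pan} this gives that $e_F\bigl({M_{\kappa}(N)}^{\leq \alpha}\otimes_{\I}\K\bigr)$ is a free $\K$-module of rank one, and as $F$ lies in it with $e_F F=F$ it is a basis. I then define $l_F$ on ${M_{\kappa}(N)}^{\leq \alpha}\otimes_{\I}\K$ by $e_F G=l_F(G)\,F$; it is $\K$-linear and $l_F(F)=1$.

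\emph{Interpolation.} Fix a classical point $\kappa_0$ of weight $k$. Specialization at $\kappa_0$ is $\T$-equivariant, so $e_F$ specializes to the idempotent $e_{F(\kappa_0)}$ cutting out the generalized $F(\kappa_0)$-eigenspace, which by the above is the line generated by $F(\kappa_0)$; hence $e_{F(\kappa_0)}G(\kappa_0)=l_F(G)\vert_{\kappa=\kappa_0}\,F(\kappa_0)$. Now consider the bilinear form $B(g,h)=\lla g^{c}\vert\tau_{Np},h\rra_{Np}$ on cusp forms of level $Np$. With the normalization of $\tau_{Np}$ as in \cite{H6}, the operators $T_q$ ($q\nmid Np$) and $U_p$ are all self-adjoint for $B$ (the usual computation $\tau_{Np}U_p\tau_{Np}^{-1}=U_p^{*}$, etc.), so $B$ pairs to zero any two distinct Hecke eigensystems and is non-degenerate on the one-dimensional $F(\kappa_0)$-eigenspace; in particular $\lla F(\kappa_0)^{c}\vert\tau_{Np},F(\kappa_0)\rra_{Np}\neq 0$, and $B$ annihilates $\ker e_{F(\kappa_0)}$. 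Therefore
\begin{align*}
\lla F(\kappa_0)^{c}\vert\tau_{Np},G(\kappa_0)\rra_{Np}
&=\lla F(\kappa_0)^{c}\vert\tau_{Np},e_{F(\kappa_0)}G(\kappa_0)\rra_{Np}\\
&=l_F(G)\vert_{\kappa=\kappa_0}\;\lla F(\kappa_0)^{c}\vert\tau_{Np},F(\kappa_0)\rra_{Np},
\end{align*}
and dividing gives the asserted formula.

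\emph{Main obstacle.} The substantive point is the rank-one statement: that the $\lambda_F$-part of the space of finite-slope overconvergent forms is free of rank one over $\K$, uniformly along $\Cc_F$. This rests on strong multiplicity one (so a classical specialization is pinned down inside level-$Np$ finite-slope forms by its full Hecke eigensystem), on $\Cc_F\to\W(r)$ being finite and flat, and on the hypothesis (\textbf{notCM}), which is what rules out the locus where the relevant eigenspace could jump (in particular where the two $p$-stabilizations could coincide, so that $B$ could fail to separate them). All of this is packaged in \cite[Proposition 6.7]{Pan}, which one may alternatively invoke directly; the self-adjointness of the Hecke operators for $B$ and the compatibility of $e_F$ with specialization are then routine.
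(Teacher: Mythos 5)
Your construction is correct and is essentially the argument the paper has in mind: the paper gives no proof of its own, invoking ``standard linear algebra'' and citing \cite[Proposition 6.7]{Pan}, and your idempotent/multiplicity-one construction of $l_F$ together with the self-adjointness of the Hecke operators for the $\tau_{Np}$-twisted Petersson pairing is precisely that standard argument. One small aside: the hypothesis (\textbf{notCM}) is not what underpins the rank-one statement here --- in the paper it is used later, to control the pole of the Kubota--Leopoldt factor in $L_p(\kappa,\kappa')$ --- but since your proof does not actually rely on it, this does not affect correctness.
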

We can find $H_F(\kappa)$ in $\I$ such that $H_F(\kappa)l_F$ is defined over $\I$. We shall refer sometimes to $H_F(\kappa)$ as the {\it denominator} of $l_F$. \\
We define consequently a $\K$-linear form  on  ${M^{(2)}_{\kappa}(N)}^{\leq \alpha}\otimes_{\I}\K$ by 
\begin{align*}
 l_{F \times F}:= l_F \otimes l_F
\end{align*}
under the decomposition in (\ref{2varslope}).\\

Before defining the $p$-adic $L$-functions, we need an operator to lower the level of the Eisenstein series constructed before. We follow \cite[\S 1 VI]{H1bis}. Fix a prime-to-$p$ integer $L$, with $N|L$. We define for classical weights $k$:
\begin{align*}
\begin{array}{ccccc}
 T_{L/N,k} : & M_k(Lp,A) & \rightarrow & M_k(Np,A)\\
 & f & \mapsto &{(L/N)}^{k/2} \sum_{[\gamma] \in \G(N)/ \G(N,L/N)}  f |_k \left(\begin{array}{cc} 
1 & 0 \\
0 & L/N
\end{array} \right) |_k \gamma
\end{array}.
\end{align*}
As $L$ is prime to $p$, it is clear that $T_{L/N,k}$ commutes with $U_p$. It extends uniquely to a linear map
$$\begin{array}{ccccc}
 T_{L/N} : & {M_{\kappa}(L)} & \rightarrow & {M_{\kappa}(N)}
\end{array}
$$
which in weight $k$ specializes to $T_{L/N,k}$.\\

 We have a map ${M_{\kappa}(N)}^{\leq \alpha} \hookrightarrow {M_{\kappa}(N^2R^2)}^{\leq \alpha}$. We define  $1_{N^2R^2/N}$ to be one left inverse. We define 
\begin{align*}
 L_p(\kappa,\kappa') & = \frac{N_0}{N^2R^2N_1}l_F \otimes l_F\left((U^{-1}_{N^2/N_1^2}\circ 1_{N^2R^2/N}) \otimes T_{N^2R^2/N}(\h(\kappa,\kappa'))\right) (\mbox{for } \alpha=0),\\
L^*_p(\kappa)        & = \frac{N_0}{N^2R^2N_1} l_F \otimes l_F\left((U^{-1}_{N^2/N_1^2}\circ 1_{N^2R^2/N})  \otimes T_{N^2R^2/N}(\h^*(\kappa)\right) (\mbox{for } \alpha < +\infty).
\end{align*}
We will see in the proof of the following theorem  that it is independent of the left inverse  $1_{N^2R^2/N}$ which we have chosen.\\

We fix some notations. For a Dirichlet character $\eta$, we denote by $\eta_0$ the associated primitive character. Let $\lambda_p(\kappa) \in \I$ be the $U_p$-eigenvalue of $F$. We say that $(\kappa, \kappa') \in \Cc_{F} \times \W$ is of type $(k; t,\eps)$ if :
\begin{itemize}
 \item $\kappa_{\vert_{\W(r)}}=[k]$ with $k\geq 2$,
 \item $\kappa'=\eps[t]$ with $1 \leq t \leq k-1$ and $\eps$ finite order character defined modulo $p^n$, $n\geq 1$.
\end{itemize}
Let as before $s=k-t-1$. We define
\begin{align*} 
E_1(\kappa,\kappa')= & \lambda_p(\kappa)^{-2n_0}(1-{(\chi\eps\omega^{-s})}_0(p)\lambda_p(\kappa)^{-2}p^{s}))
\end{align*}
where $n_0=0$ (resp. $n_0=n$) if $\chi\eps\omega^{-s}$ is (resp. is not) trivial at $p$.\\ 
If $F(\kappa)$ is primitive at $p$ we define $E_2(\kappa,\kappa')=1$, otherwise
\begin{align*}
E_2(\kappa,\kappa')= & (1-{(\chi^{-1}\eps^{-1}\omega^{s}\phi)}_0(p)p^{k-2-s}) \times \\
 & (1-{(\chi^{-1}\eps^{-1}\omega^{s}\phi^{2})}_0(p)\lambda_p(\kappa)^{-2} p^{2k-3-s}).
\end{align*}
 We  denote by $F^{\circ}(\kappa)$ the primitive form associated to $F(\kappa)$. We shall write $W'(F(\kappa))$ for the prime-to-$p$ part of the root number of $F^{\circ}(\kappa)$. If $F(\kappa)$ is not $p$-primitive we pose 
\begin{align*}
S(F(\kappa)) = 
(-1)^k   \left( 1 - \frac{\phi_0(p)p^{k-1}}{\lambda_p(\kappa)^{2}} \right)\left( 1 - \frac{\phi_0(p)p^{k-2}}{\lambda_p(\kappa)^{2}} \right),
\end{align*}
and $S(F(\kappa)) = (-1)^k  $ otherwise. 
We pose 
\begin{align*}
C_{\kappa,\kappa'} &  =i^{1-k} s ! G\left({(\chi\eps\omega^{-s})}^{-1}\right) {(\chi\eps\omega^{-s})}_0(p^{n-n_0}) {(N_1Rp^{n_0})}^{s} N^{-k/2} 2^{-s},\\
C_{\kappa} & = C_{\kappa,[k-k_0+1]}, \\
\Omega(F(\kappa),s) & = W'(F(\kappa)){(2\pi i)}^{s+1}\lla F^{\circ}(\kappa),F^{\circ}(\kappa)\rra.
\end{align*}
We have the following theorem, which will be proven at the end of the section.
\begin{theo}\label{T1}
\begin{itemize}
\item[i)]
 The function $L_p(\kappa,\kappa')$ is defined  on $\Cc_{F} \times \W$, it is meromorphic in the first variable and bounded  in the second variable. For all classical points $(\kappa, \kappa')$ of type $(k; t,\eps)$ with $k\geq 2$, $1 \leq t \leq k-1$, we have the following interpolation formula
\begin{align*}
L_p(\kappa,\kappa') =  C_{\kappa,\kappa'} E_1(\kappa,\kappa')E_2(\kappa,\kappa') \frac{\Ll(s+1,\mathrm{Sym}^2(F(\kappa)),\chi^{-1}\eps^{-1}\omega^{s})}{S(F(\kappa))\Omega(F(\kappa),s)}.
\end{align*}
\item[ii)] 
The function $L_p^*(\kappa)$ is meromorphic on $\Cc_{F}$.  For $\kappa$ of type $k$ with $ k \geq k_0 $, we have the following interpolation formula
\begin{align*}
  L^*_p(\kappa) =  C_{\kappa} E_2(\kappa,[k-k_0+1]) \frac{\Ll(k_0-1,\mathrm{Sym}^2(F(\kappa)),{\chi'}^{-1}\chi_1^{-1})}{S(F(\kappa))\Omega(F(\kappa),k_0-2)}.
\end{align*}
\end{itemize}
\end{theo}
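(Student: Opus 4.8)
The plan is to deduce both interpolation formulas from the integral representation of Proposition \ref{InteExpr}, combined with the definition of the $p$-adic Petersson product $l_F$ and the explicit $q$-expansions of the Eisenstein families $\h(\kappa,\kappa')$ and $\h^*(\kappa)$ constructed in the previous section. First I would fix a classical point $(\kappa,\kappa')$ of type $(k;t,\eps)$ and unwind the definition of $L_p(\kappa,\kappa')$: by the specialization property in the Proposition preceding the theorem, $l_F\otimes l_F$ evaluated at $\kappa_0$ computes the normalized double Petersson product $\lla F(\kappa_0)^c|\tau, -\rra / \lla F(\kappa_0)^c|\tau, F(\kappa_0)\rra$ in each variable. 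Feeding in $\h([k],\eps[t])$, which the previous Proposition identifies (up to the explicit constant ${(-1)}^s c^s_{t+1} A$ and powers of $U_p^{\otimes^2}$) with $\mathrm{Pr}^{\mathrm{ord}}\h'_{p^i,\chi\eps\omega^{-s}}$, I would then apply Proposition \ref{InteExpr} with $\chi$ replaced by $\chi\eps\omega^{-s}$, $t,s$ as given, to rewrite the double product in terms of $\Ll(s+1,\mathrm{Sym}^2(F(\kappa)),\chi^{-1}\eps^{-1}\omega^s)$ times the archimedean constants $\Omega_{k,s}(s_1)$, $p_{s_1}(t+1)/d_{s_1}(t+1)$, Gauss sums, and the level-changing factors ${(RN_1p^n)}^{s+3-k}{(N/N_1)}^{2-k}$.

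Next I would carefully track all the normalizing operators: the factor $\frac{N_0}{N^2R^2N_1}$, the trace operators $T_{N^2R^2/N}$ and $U^{-1}_{N^2/N_1^2}\circ 1_{N^2R^2/N}$, and the projector $\mathrm{Pr}_\infty^{\leq\alpha}$ from Lemma \ref{lemmaprojinfty}. The point here is to show that the composite of these operators, when paired against $F$ via $l_F\otimes l_F$, reproduces exactly the $f|U_{N^2/N_1^2}$ appearing on the right-hand side of Proposition \ref{InteExpr} and that the Atkin-Lehner twist $\tau$ together with the involution $\left(\begin{smallmatrix} 0 & -1 \\ N^2p^{2n}R^2 & 0\end{smallmatrix}\right)$ present in the Proposition accounts for the root number $W'(F(\kappa))$ and the $S(F(\kappa))$ correction. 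The factor $E_1(\kappa,\kappa')$ comes from the comparison between $\mathrm{Pr}^{\mathrm{ord}}$ composed with $U_p^{-2i}$ acting on the level-$p^n$ Eisenstein series and the stabilized form at level $p$ — this is the standard computation relating $U_p$-stabilizations to the $(1-(\cdots)_0(p)\lambda_p^{-2}p^s)$ Euler factor, with the $\lambda_p^{-2n_0}$ bookkeeping the cases where $\chi\eps\omega^{-s}$ is or is not ramified at $p$. The factor $E_2(\kappa,\kappa')$ arises from passing between $F(\kappa)$ and its $p$-primitive form $F^\circ(\kappa)$ in the denominator $\lla F^\circ(\kappa),F^\circ(\kappa)\rra$ versus $\lla F(\kappa),F(\kappa)\rra$, using the standard relation between Petersson norms of a newform and its $p$-stabilization, together with the comparison of $\Ll$ against the imprimitive $L$-function at $q\mid N$.

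For part ii) the argument is formally identical but uses instead the improved Eisenstein family $\h^*(\kappa)$, whose specialization at $[k]$ is (up to ${A^*}^{-1}{(-1)}^{k_0}$ and $\mathrm{Pr}^{\leq\alpha}$) the non-twisted Eisenstein series ${\h'}^*_{1,\chi_1\chi'}$ evaluated at $u=\tfrac12-k+k_0-1$, i.e.\ with $t=k-k_0+1$ and $s=k_0-2$ fixed. Since here the character $\chi_1\chi'$ has prime-to-$p$ conductor and the Eisenstein series is at level $N^2R^2p$ rather than $N^2R^2p^{2n}$, applying Proposition \ref{InteExpr} with these specialized parameters yields $\Ll(k_0-1,\mathrm{Sym}^2(F(\kappa)),{\chi'}^{-1}\chi_1^{-1})$ with no $E_1$-type factor — which is precisely why $L_p^*$ is the "improved" function: no trivial-zero Euler factor is removed. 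The meromorphy statements follow because $l_F$ is $\K$-valued and $H_F(\kappa)l_F$ is $\I$-integral, so the output lies in $\K=\mathrm{Frac}(\I)$; boundedness in $\kappa'$ follows from the fact that the Eisenstein family $\h(\kappa,\kappa')$ has bounded (indeed integral) $q$-expansion coefficients in $\kappa'$, since $a_{T_1,T_4,L}$ involves only the bounded Kubota-Leopoldt $L_p(\kappa',-)$ and bounded terms $\kappa'(u^{l_q})$, while the polynomials $\bfrak$ and the constant $c^s_{t+1}$ have been absorbed into the interpolation.

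The main obstacle I expect is the precise bookkeeping of the constants and Euler factors: matching the archimedean periods $\Omega_{k,s}(s_1)$, $\Gamma$-factors, powers of $2$, $\pi$, and $i$, and Gauss sums coming out of Proposition \ref{InteExpr} with the claimed $C_{\kappa,\kappa'}$, and — more seriously — verifying that the interplay between $\mathrm{Pr}^{\mathrm{ord}}$, the iterated $U_p^{\otimes^2}$, the level-lowering trace maps, and the Atkin-Lehner normalization produces exactly $E_1\cdot E_2\cdot S(F(\kappa))^{-1}$ and not some spurious extra power of $\lambda_p(\kappa)$ or $p$. This requires being careful about whether one works with $F$ or $F^c$, with $\tau_{Np}$ or $\tau_{N}$, and about the precise ray-of-overconvergence issues invoked via \cite[Corollary 3.4.7]{UrbNholo} to guarantee that the punctual limit defining $\h(\kappa,\kappa')$ is genuinely a finite-slope family whose classical specializations are the expected classical Eisenstein series. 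I would organize the proof so that the purely formal identity (double Petersson product of a family against the Eisenstein family equals, after normalization, the $p$-adic $L$-function) is separated cleanly from the weight-by-weight classical computation where Proposition \ref{InteExpr} is actually invoked.
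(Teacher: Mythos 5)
Your proposal follows essentially the same route as the paper's proof: unwind the definition of $L_p(\kappa,\kappa')$ via the specialization property of $l_F\otimes l_F$, identify the specializations of $\h(\kappa,\kappa')$ and $\h^*(\kappa)$ with the classical (twisted and untwisted) Eisenstein series, apply Proposition \ref{InteExpr} (together with the Petersson/Atkin--Lehner and $U_p$-projection formulas of Panchishkin and Hida and the local computation of Böcherer--Schmidt) to produce the symmetric square $L$-value, and then track the trace operators, stabilization and primitivity corrections to account for $E_1$, $E_2$, $S(F(\kappa))$, $W'(F(\kappa))$ and the constant $C_{\kappa,\kappa'}$, with the improved case handled identically at the fixed twist $u=\tfrac12-k+k_0-1$ so that no $E_1$-factor appears. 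This matches the paper's argument in structure and in the source of each factor, differing only in bookkeeping details left unverified.
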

Let us denote by $\tilde{L}_p(\kappa,\kappa')$ the two-variable $p$-adic $L$-function of \cite[Theorem 4.14]{RosOC}, which is constructed for any slope but NOT for $p=2$.
We can deduce the fundamental corollary which allows us to apply the method of Greenberg and Stevens;
\begin{coro}\label{CoroImp}
For $\alpha=0$ (resp. $\alpha > 0$ and $p\neq 2$)  we have the following factorization of locally analytic functions around $\kappa_0$ in $\Cc_F$:
\begin{align*}
L_p(\kappa,[k-k_0+1]) & = (1-\chi'\chi_1(p)\lambda_p(\kappa)^{-2}p^{k_0-2})L_p^*(\kappa)\\
(\mbox{resp. } C \frac{\kappa^{-1}(2)\tilde{L}_p(\kappa,[k-k_0+1])}{1-\phi^{-2}\chi^2 \kappa(4)2^{-2k_0}}& = (1-\chi'\chi_1(p)\lambda_p(\kappa)^{-2}p^{k_0-2})L_p^*(\kappa)),
\end{align*}
where $C$ is a constant independent of $\kappa$, explicitly determined by the comparison of $C_{\kappa}$ here and in \cite[Theorem 4.14]{RosOC}.
\end{coro}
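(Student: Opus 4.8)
The plan is to deduce the factorization directly from the interpolation formulas in Theorem \ref{T1}, by comparing the two meromorphic functions at the (Zariski-dense) set of classical points of type $k$ with $k \geq k_0$ and then invoking the identity principle for meromorphic functions on the one-dimensional rigid space $\Cc_F$. First I would specialize $L_p(\kappa,\kappa')$ at $\kappa' = [k-k_0+1]$, i.e.\ take $t = k-k_0+1$, so that $s = k-t-1 = k_0-2$ is \emph{constant} along the family, and the finite-order character $\eps$ is trivial; then $\chi\eps\omega^{-s} = \chi_1\chi'\omega^{k_0-2}\omega^{-(k_0-2)} = \chi_1\chi'$, which is prime to $p$, so $n = n_0 = 0$. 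Plugging into the formula of part i) and comparing with the formula of part ii), almost every factor matches on the nose: the $L$-value $\Ll(k_0-1,\mathrm{Sym}^2(F(\kappa)),{\chi'}^{-1}\chi_1^{-1})$, the factor $S(F(\kappa))$, the period $\Omega(F(\kappa),k_0-2)$, the factor $E_2(\kappa,[k-k_0+1])$, and (after checking $C_{\kappa,\kappa'}|_{\kappa'=[k-k_0+1]} = C_\kappa$ from the definitions, which is immediate) the constant $C_\kappa$. The only discrepancy is the factor $E_1(\kappa,[k-k_0+1])$, which with $n_0 = 0$ reduces to $1 - {(\chi_1\chi')}_0(p)\lambda_p(\kappa)^{-2}p^{k_0-2}$; since $\chi_1\chi'$ is prime to $p$ this equals $1 - \chi'\chi_1(p)\lambda_p(\kappa)^{-2}p^{k_0-2}$. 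This gives exactly the first displayed identity at all classical points of type $k \geq k_0$, hence everywhere by density and meromorphy.

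For the finite-slope case $\alpha > 0$, $p \neq 2$, the function $L_p(\kappa,\kappa')$ constructed here is only available as an \emph{improved} object (part ii) of Theorem \ref{T1} still holds), so in place of $L_p(\kappa,[k-k_0+1])$ I would substitute the genuine two-variable $p$-adic $L$-function $\tilde L_p(\kappa,\kappa')$ of \cite[Theorem 4.14]{RosOC}. The strategy is identical: specialize $\tilde L_p$ at $\kappa' = [k-k_0+1]$, read off its interpolation formula from \emph{loc.\ cit.}, and compare with part ii) here at classical points. The two interpolation formulas compute (twists of) the same complex $L$-value $\Ll(k_0-1,\mathrm{Sym}^2(F(\kappa)),{\chi'}^{-1}\chi_1^{-1})$ against the same Petersson norm, so their ratio is an elementary function of $\kappa$: it accounts for (a) the differing normalizing constants $C_\kappa$, collected into the $\kappa$-independent constant $C$; (b) the normalization of the Atkin--Lehner/period conventions, which is where the factor $\kappa^{-1}(2)$ and the denominator $1 - \phi^{-2}\chi^2\kappa(4)2^{-2k_0}$ come from --- these are precisely the $2$-power Euler-type factors distinguishing the two constructions, and their shape must be extracted by a bookkeeping comparison of the respective evaluation formulas; and (c) the same missing Euler factor $1 - \chi'\chi_1(p)\lambda_p(\kappa)^{-2}p^{k_0-2}$ as before, coming from $E_1$. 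Matching at classical points and extending by density finishes part two.

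The independence of the left inverse $1_{N^2R^2/N}$ claimed in the statement of Theorem \ref{T1} is not something to be reproven here: it is part of that theorem, so for the corollary I would simply cite it. The one genuine piece of work is step (b) above --- pinning down the exact rational function in $\kappa$ relating the two normalizations in the finite-slope case. This requires carefully tracking the constants $C_{\kappa,\kappa'}$, the operators $U_{N^2/N_1^2}^{-1}$ and $T_{N^2R^2/N}$, the Atkin--Lehner twists $\tau_{Np}$, and the $2$-adic subtleties flagged in Section \ref{EisenGSp4} (the factor $\kappa(u_{2T_2})$, only $2$-analytic on half-integral $T_2$) through both constructions; a sign or a power of $2$ is easy to misplace. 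By contrast, the ordinary case and the verification $C_{\kappa,[k-k_0+1]} = C_\kappa$ are routine unwindings of the definitions. I expect the comparison of normalizing constants in the $p \neq 2$, $\alpha > 0$ case to be the main obstacle, and everything else to follow formally from Theorem \ref{T1} and the density of classical points.
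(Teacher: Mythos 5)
Your plan is exactly the argument the paper intends (it states the corollary without written proof, as an immediate comparison of the interpolation formulas of Theorem \ref{T1} i), ii) and of \cite[Theorem 4.14]{RosOC} at the dense set of classical points with $\kappa'=[k-k_0+1]$, where the only discrepancy is $E_1(\kappa,[k-k_0+1])=1-\chi'\chi_1(p)\lambda_p(\kappa)^{-2}p^{k_0-2}$, followed by analytic continuation around $\kappa_0$). The only nitpick is that with $\eps$ trivial the paper's convention is $n=1$, $n_0=0$ rather than $n=n_0=0$, which is harmless here since $C_\kappa$ is by definition $C_{\kappa,[k-k_0+1]}$ and cancels identically.
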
 
We can now prove the main theorem of the paper;
\begin{proof}[Proof of  Theorem \ref{MainThBon}]
Let $f$ be as in the statement of the theorem; we take $\chi'=\chi_1=\mathbf{1}$. For $\alpha=0$ resp. $\alpha > 0$ we define
\begin{align*}
L_p(\mathrm{Sym}^2(f),s) = C^{-1}_{\kappa_0,[1]}L_p(\kappa_0,[k_0-s]),\\
L_p(\mathrm{Sym}^2(f),s) =  C \frac{ \kappa(2)\tilde{L}_p(\kappa,[k-k_0+1])}{C_{\kappa_0,[1]}(1-\phi^{-2}\chi^2 \kappa(4)2^{-2k_0})}
\end{align*} 
The two variables $p$-adic $L$-function vanishes on $\kappa'=[1]$. As $f$ is Steinberg at $p$, we have $\lambda_p(\kappa_0)^{2}=p^{k_0-2}$.\\
 Consequently, the following formula is a straightforward consequence of Section \ref{GS} and Corollary \ref{CoroImp};
\begin{align*}
 \lim_{s \rightarrow 0} \frac{L_p(\mathrm{Sym}^2(f),s)}{s} =  -2 \frac{\textup{d} \log \lambda_p(\kappa)}{\textup{d}\kappa}\vert_{\kappa=\kappa_0}  \frac{\Ll(\mathrm{Sym}^2(f),k_0-1)}{S(F(\kappa))\Omega(f,k_0-2)}.
\end{align*}
From \cite{BenLinv2,MokLinv} we obtain
\begin{align*}
 \Ll^{\mathrm{al}}(\mathrm{Sym}^2(f)) = & -2 \frac{\textup{d} \log \lambda_p(\kappa)}{\textup{d}\kappa}\vert_{\kappa=\kappa_0}.
\end{align*}
Under the hypotheses of the theorem, $f$ is Steinberg at all primes of bad reduction and we see from \cite[\S 3.3]{RosOC} that 
\begin{align*}
 \Ll(\mathrm{Sym}^2(f),k_0-1) = L(\mathrm{Sym}^2(f),k_0-1)
\end{align*}
and we are done.
\end{proof}
\begin{proof}[Proof of Theorem \ref{T1}]
 We point out that most of the calculations we need in this proof and have not already been quoted can be found in \cite{H6,Pan}. \\
If $\eps$ is not trivial at $p$, we shall write $p^n$ for the conductor of $\eps$. If $\eps$ is trivial, then we let $n=1$.\\
We recall that $s=k-t-1$; we have 
\begin{align*}
L_p(\kappa,\kappa') = &  \frac{N_0 \lla  F(\kappa)^c|\tau_{Np} , U^{-1}_{N^2/N_1^2} \lla F(\kappa)^c|\tau_{Np}, T_{N^2R^2/N,k} \h([k],\eps[t])    \rra\rra}{N^2R^2 N_1 {\lla F(\kappa)^c | \tau, F(\kappa) \rra}^2 }. 
\end{align*}
We have as in \cite[(7.11)]{Pan}
\begin{align}\label{PanUpvalue}
 \lla F(k)^c|\tau_{Np}, U_p^{-2 n +1} \mathrm{Pr}^{\mathrm{ord}} U_p^{2 n-1} g   \rra = \lambda_p(\kappa)^{1-2n}p^{(2n-1)(k-1)} \lla F(\kappa)^c|\tau_{Np}|[p^{2n-1}],  g   \rra, 
\end{align}
where $f|[p^{2n-1}](z)=f(p^{2n-1}z)$. We recall the well-known formulae \cite[page 79]{H1bis}:
\begin{align*}
 \lla f|[p^{2n-1}] , T_{N^2R^2/N,k} g   \rra = & {(NR^2)}^k \lla f|[p^{2n}NR^2], g   \rra, \\
\tau_{Np}|[p^{2n-1}N^2R^2] =  &{\left(N R^2{p^{2n-1}}\right)}^{-k/2}\tau_{N^2R^2p^{2n}},\\
  \frac{\lla F(\kappa)^{c}|\tau_{Np}, F(\kappa) \rra}{\lla F(\kappa)^{\circ}, F(\kappa)^{\circ} \rra}  = &(-1)^k W'(F(\kappa)) p^{(2-k)/2} \lambda_p(\kappa) \times \\
& \left( 1 - \frac{\phi_0(p)p^{k-1}}{\lambda_p(\kappa)^{2}} \right)\left( 1 - \frac{\phi_0(p)p^{k-2}}{\lambda_p(\kappa)^{2}} \right).
\end{align*}
So we are left to calculate 
\begin{align*}
\frac{N_0{(NR^2)}^{k/2}\lambda_p(\kappa)^{1-2n}p^{(2n-1)\left(\frac{k}{2}-1\right)}}{A {(-1)}^s c_{t+1}^{s}N_1 N^2R^2} \lla   F(\kappa)^c|\tau_{Np}, U^{-1}_{N^2/N_1^2} \lla F(\kappa)^c|\tau_{N^2R^2p^{n}}, \h'_{1,\chi\eps\omega^{-t}}(z,w)\vert_{s=\frac{1}{2}-t}    \rra\rra.
\end{align*}
 We use Proposition \ref{InteExpr} and  \cite[(3.29)]{BS} (with the notation of {\it loc. cit.} $\beta_1=\frac{\lambda_p^{2}(\kappa)}{p^{k-1}}$) to obtain that  the interior Petersson product is a scalar multiple of  
\begin{align}\label{InteriorPro}
\frac{\Ll(\mathrm{Sym}^2(f),s +1)\tilde{E}(\kappa,\kappa')E_2(\kappa,\kappa')F(\kappa)\vert U_{N^2/N_1^2}}{\lla F(\kappa)^{\circ}, F(\kappa)^{\circ} \rra  W'(F(\kappa)) p^{(2-k)/2} \lambda_p(\kappa) S(F(\kappa))}.
\end{align}
Here
\begin{align*}
 \tilde{E}(\kappa,\kappa') = -p^{-1}(1-\chi^{-1}\eps^{-1}\omega^{s}(p)\lambda_p(\kappa)^2 p^{t+1-k})
\end{align*}
if $\chi\eps\omega^{-s}$ is trivial modulo $p$, and $1$ otherwise.   The factor $E_2(\kappa,\kappa')$ appears because $F(\kappa)$ could not be primitive. Clearly it is independent of $1_{N^2R^2/N}$ and we have $l_F(F(\kappa))=1$. We explicit the constant which multiplies (\ref{InteriorPro}); 
\begin{align*}
 & {(-1)}^s \frac{ N_0 {(NR^2)}^{k/2}\lambda_p(\kappa)^{1-2n}p^{(2n-1)\left(\frac{k}{2}-1\right)} N_1^{s+1}{(Rp^{n}) }^{s +3  -k}N^{2-k}}{ B(t) {(2 \pi i )}^{s}  c_{t+1}^{s} G\left({(\chi\eps\omega^{-s})}\right)N^2R^2N_1} 
\frac{{\omega^{k-t-1}(-1)}\Omega_{k,s}({s_1}) p_{s_1}(t+1)}{d_{s_1}(t+1)} \\
= &  {(-1)}^{-\frac{k}{2}}   \frac{N^{-\frac{k}{2}}N_0 N_1^{s}R^{s+1}\lambda_p(\kappa)^{1-2n} p^{(2-k)/2} {p}^{n( s +1)} (2s)! 2^{-2s}2^{2t} \pi^{3/2}}{2^{1+2t} \pi^{\frac{5}{2}}{(2 \pi i )}^{s} G\left({(\chi\eps\omega^{-s})}\right)  2^{-s}\frac{(2s)!}{s!}}.
\end{align*}
If $\eps_1\eps\omega^{-s}$ is not trivial we obtain 
\begin{align*}
= & i^{1-k} \frac{ N^{-\frac{k}{2}}N_1^s  R^{s}\lambda_p(\kappa)^{1-2n} p^{(2-k)/2} {p}^{n s} s! G\left({(\chi\eps\omega^{-s})}^{-1}\right)}{{(2 \pi i )}^{s+1} 2^{s} },
\end{align*}
otherwise 
\begin{align*}
= i^{1-k} \frac{ N^{-\frac{k}{2}}N_1^sR^{s}\lambda_p(\kappa)^{-1} p^{(2-k)/2} {p}^{s+1} s! G\left({(\chi_1\chi')}^{-1}\right)}{{(2 \pi i )}^{s+1}  2^{s} \chi_1\chi'(p)}.
\end{align*}

The calculations for $L_p^*(\kappa)$ are similar. We have to calculate 
\begin{align*}
\frac{N_0{(NR^2)}^{k/2}}{A^*N^2R^2N_1} \lla   F(\kappa)^c|\tau_{Np} , U^{-1}_{N^2/N_1^2} \lla F(\kappa)^c|\tau_{N^2R^2p},  {(-1)}^{k_0}{\h'}^*_{1,\chi_1\chi'}(z,w)\vert_{u=s_1}    \rra\rra,
\end{align*}
where $s_1 =\frac{1}{2} - k + k_0 -1$. The interior Petersson product equals (see \cite[Theorem 3.1]{BS} with $M$, $N$ of {\it loc. cit.} as follows: $M=R^2 N^2 p$, $N=N_1R$)
\begin{align*}
 \frac{R^{k_0+1-k} {(N^2p)}^{\frac{2-k}{2}} {(-1)}^{k_0}\Omega_{k,k_0-2}({s_1}) p_{s_1}(k - k_0+2)}{d_{s_1}(k - k_0+2) \lla F(\kappa)^{\circ}, F(\kappa)^{\circ} \rra  W'(F(\kappa)) p^{(2-k)/2} \lambda_p(\kappa) S(F(\kappa))} F(\kappa)\vert U_p \vert U_{N^2/N_1^2},
\end{align*}
so we have 
\begin{align*}
 L^*_p(\kappa) =  & i^{1-k}\frac{ N_1^{k_0-2}R^{k_0-2}N^{k/2} (k_0-2)!}{ {(2 \pi i )}^{k_0-1} G(\chi_1\chi') 2^{k_0-2}}  \frac{ E_2(\kappa,[k-k_0+1])\Ll(k_0-1,\mathrm{Sym}^2(F(\kappa)),{(\chi_1\chi')}^{-1})}{S(F(\kappa))W'(F(\kappa)){\lla F(\kappa)^{\circ}, F(\kappa)^{\circ} \rra}}.
\end{align*}
\end{proof}
We give here some concluding remarks.  As $F(\kappa)$ has not complex multiplication by $\chi$, we can see exactly as in \cite[Proposition 5.2]{H6} that $H_F(\kappa)L_p(\kappa,\kappa')$ is holomorphic along $\kappa'=[0]$ (which is the pole of the Kubota Leopoldt $p$-adic $L$-function).\\ We point out that the analytic $\Ll$-invariant for $CM$ forms has already been studied in literature \cite{DD,HarCM,HarLei}.\\
Note also that our choice of periods is not optimal \cite[\S 6]{RosOC}.

\bibliographystyle{alpha}
\bibliography{Bibliografy}
\end{document}